\author[P.~Leonetti]{Paolo Leonetti}
\address{
Universit\'a degli Studi dell'Insubria, via Monte Generoso 71, 21100 Varese, Italy}
\email{leonetti.paolo@gmail.com}
\urladdr{\url{https://sites.google.com/site/leonettipaolo/}} 
\keywords{Ideal convergence; summability; regular matrices; Rudin--Keisler order; ideal core.}
\subjclass[2020]{Primary: 40A35, 40G15. Secondary: 40H05, 54A20.}
\title{Core equality of real sequences}
   \def\MR#1{}
\newtheorem{thm}{Theorem}[section]
\newtheorem{lem}[thm]{Lemma}
\theoremstyle{definition} 
\let\olddefi\defi
\renewcommand{\defi}{\olddefi\normalfont}
\let\oldexample\example
\renewcommand{\example}{\oldexample\normalfont}
\newtheorem{rmk}[thm]{Remark}
\let\oldrmk\rmk
\renewcommand{\rmk}{\oldrmk\normalfont}
\theoremstyle{remark}
\newtheorem{claim}{\textsc{Claim}}
\newtheorem*{claim*}{\textsc{Claim}}
\providecommand{\MR}[1]{}
\providecommand{\MR}{\relax\ifhmode\unskip\space\fi MR }
\providecommand{\href}[2]{#2}
\begin{document}

\maketitle
\thispagestyle{empty}

\begin{abstract}
\noindent Given an ideal $\mathcal{I}$ on $\omega$ and a bounded real sequence $\bm{x}$, we denote by $\mathrm{core}_{\bm{x}}(\mathcal{I})$ the smallest interval $[a,b]$ such that $\{n \in \omega: x_n \notin [a-\varepsilon,b+\varepsilon]\} \in \mathcal{I}$ for all $\varepsilon>0$ (which corresponds to the interval $[\,\liminf \bm{x}, \limsup \bm{x}\,]$ if $\mathcal{I}$ is the ideal $\mathrm{Fin}$ of finite subsets of $\omega$). 

First, we characterize all the infinite real matrices $A$ such that 
$$
\mathrm{core}_{A\bm{x}}(\mathcal{J})=\mathrm{core}_{\bm{x}}(\mathcal{I})
$$
for all bounded sequences $\bm{x}$, provided that $\mathcal{J}$ is a countably generated ideal on $\omega$ and $A$ maps bounded sequences into bounded sequences. 
Such characterization fails if both $\mathcal{I}$ and $\mathcal{J}$ are the ideal of asymptotic density zero sets. 
Next, we show that such equality is possible for distinct ideals $\mathcal{I}, \mathcal{J}$, answering an open question in [J.~Math.~Anal.~Appl.~\textbf{321} (2006), 515--523]. Lastly, we prove that, if $\mathcal{J}=\mathrm{Fin}$, the above equality holds for some matrix $A$ if and only if 
$\mathcal{I}=\mathrm{Fin}$ or $\mathcal{I}$ is an isomorphic copy of $\mathrm{Fin}\oplus \mathcal{P}(\omega)$ on $\omega$. 
\end{abstract}

\section{Introduction}

Let $\mathcal{I}$ be an ideal on the natural numbers $\omega$, that is, a family of subsets of $\omega$ closed under subsets and finite unions. Unless otherwise stated, it is assumed that $\mathcal{I}$ contains the family $\mathrm{Fin}$ of finite sets and that $\omega \notin \mathcal{I}$. We denote by $\mathcal{I}^+:=\mathcal{P}(\omega)\setminus \mathcal{I}$ and $\mathcal{I}^\star:=\{S\subseteq \omega: \omega\setminus S \in \mathcal{I}\}$ the family of $\mathcal{I}$-positive sets  and the dual filter of $\mathcal{I}$, respectively. Ideals are regarded as subsets of the Cantor space $\{0,1\}^\omega$, hence it is possible to speak about $F_\sigma$-ideals, analytic ideals, meager ideals, etc. An important example of an ideal is the family $\mathcal{Z}$ of sets $S\subseteq \omega$ with asymptotic density zero, that is, $|S\cap [0,n]|=o(n)$ as $n\to \infty$.

Given a sequence $\bm{x}=(x_n: n \in \omega)$ taking values in a topological space $X$, we define its $\bm{\mathcal{I}}$\textbf{-core} by 
$$
\mathrm{core}_{\bm{x}}(\mathcal{I}):=\bigcap_{S \in \mathcal{I}^\star}\overline{\mathrm{co}}\left\{x_n: n \in S\right\},
$$
where $\mathrm{co}$ denotes the convex hull operator and $\overline{\mathrm{co}}$ its closure, see \cite{MR4126774, MR3955010}. 
In the cases where $\mathcal{I}=\mathrm{Fin}$ and $\mathcal{I}=\mathcal{Z}$, the $\mathcal{I}$-core of $\bm{x}$ is usually called \textquotedblleft Knopp core\textquotedblright\, and \textquotedblleft statistical core,\textquotedblright\, respectively, see \cite{MR1441452, MR1416085, MR0142952, MR0487146}.
Let also $\Gamma_{\bm{x}}(\mathcal{I})$ denote the set of $\mathcal{I}$\emph{-cluster points} of $\bm{x}$, that is, the set of all $\eta \in X$ such that $\{n \in \omega: x_n \in U\} \in \mathcal{I}^+$ for all neighborhoods $U$ of $\eta$. It has been shown in \cite[Corollary 2.3]{MR3955010} that, if $\bm{x}$ is bounded real sequence, then $\mathrm{core}_{\bm{x}}(\mathcal{I})=\mathrm{co}(\Gamma_{\bm{x}}(\mathcal{I}))$. In addition, since $\Gamma_{\bm{x}}(\mathcal{I})$ is a nonempty compact set, see e.g. \cite[Lemma 3.1]{MR3920799}, it follows that 
\begin{equation}\label{eq:Icorerepresentation}
\mathrm{core}_{\bm{x}}(\mathcal{I})=\left[\,\mathcal{I}\text{-}\liminf \bm{x}, \mathcal{I}\text{-}\limsup \bm{x}\,\right]
\end{equation}
for all real bounded sequences $\bm{x}$, where $\mathcal{I}\text{-}\liminf \bm{x}:=\min \Gamma_{\bm{x}}(\mathcal{I})$ and $\mathcal{I}\text{-}\limsup \bm{x}:=\max \Gamma_{\bm{x}}(\mathcal{I})$ (note that, if $\mathcal{I}=\mathrm{Fin}$ then $\mathcal{I}\text{-}\liminf$ coincides with the ordinary $\liminf$, and analogously for $\mathcal{I}\text{-}\limsup$). 

Given an infinite matrix $A=(a_{n,k}: n,k \in \omega)$, we denote by $\mathrm{dom}(A)$ its domain, that is, the set of real sequences $\bm{x}=(x_n)$ for which 
the 
$A$-transformed sequences $A\bm{x}:=\left(A_n\bm{x}: n \in \omega\right)$ are well defined, which means that the series 
$$
A_n\bm{x}:=\sum_{k \in \omega} a_{n,k}x_k 
$$ 
is convergent to a (finite) real number for each $n \in \omega$. We write $A\ge 0$ if $a_{n,k}\ge 0$ for all $n,k \in \omega$. 
Given sequence spaces $\mathcal{A}, \mathcal{B} \subseteq \mathbf{R}^\omega$, we denote by $(\mathcal{A}, \mathcal{B})$ the family of infinite matrices $A$ such that $\mathcal{A}\subseteq \mathrm{dom}(A)$ and $A\bm{x} \in \mathcal{B}$ for all $\bm{x} \in \mathcal{A}$. For instance, it is well known that $A \in (\ell_\infty, \ell_\infty)$ if and only if $A \in (c,c)$ if and only if $\|A\|<\infty$, where $\|A\|:=\sup_n \sum_k|a_{n,k}|$, see e.g. \cite[Theorem 2.3.5]{MR1817226}. Here, as usual, $\ell_\infty$ and $c$ stand for the vector space of bounded real sequences and convergent real sequences, respectively. $\ell_\infty$ and all its subspaces are endowed with the topology induced by the supremum norm. 

We denote by $c_b(\mathcal{I})$ the vector space of bounded real sequences $\bm{x}$ which, in addition, 
are $\mathcal{I}$-convergent to some $\eta \in \mathbf{R}$, namely, $\{n \in \omega: x_n \in U\} \in \mathcal{I}^\star$ for all neighborhoods $U$ of $\eta$ (this will be shortened as $\mathcal{I}\text{-}\lim \bm{x}=\eta$). Note that $c_b(\mathcal{I})=\ell_\infty$ whenever $\mathcal{I}$ is maximal (that is, $\mathcal{I}^\star$ is a free ultrafilter on $\omega$). 
Structural properties of bounded $\mathcal{I}$-convergent sequences, their $\mathcal{I}$-cluster points, and the relationship with $A$-summability have been extensively studied, see e.g. \cite{MR4600193, MR2735533, MR3863065, MR4041540, MR3883309, MR2340330, MR3836186, 
Leo25conv, 
MR4507668, Alwin23} and references therein. 
Given ideals $\mathcal{I}, \mathcal{J}$ on $\omega$, we say that an infinite real matrix $A$ is $\bm{(\mathcal{I}, \mathcal{J})}$\textbf{-regular} if it maps $\mathcal{I}$-convergent bounded sequences into $\mathcal{J}$-convergent bounded sequences preserving the corresponding ideal limits, namely, 
$$
A \in (c_b(\mathcal{I}), c_b(\mathcal{J})) 
\quad \text{ and }\quad 
\mathcal{I}\text{-}\lim \bm{x} = \mathcal{J}\text{-} \lim A\bm{x} \,\,\text{ for all }\bm{x} \in c_b(\mathcal{I}),
$$
see e.g. \cite{ConnorLeo, Leo22, MR4506089}. 
Note that $(\mathrm{Fin}, \mathrm{Fin})$-regular matrices are simply the classical regular matrices. 
Probably the most important regular matrix is the Cesàro matrix $C_1=(a_{n,k})$ defined by $(a_{n,k})=1/n$ if $k \le n$ and $a_{n,k}= 0$ otherwise. 

A characterization of $(\mathcal{I}, \mathcal{J})$-regular matrices has been recently proved by the author and Jeff Connor in \cite[Theorem 1.2 and Theorem 1.3]{ConnorLeo}, see also \cite[Corollary 2.11]{Leo22}. 
\begin{thm}\label{thm:mainsilvermantoeplitz} 
Let $A$ be an infinite real matrix and fix ideals $\mathcal{I}$, $\mathcal{J}$ on $\omega$. Suppose also that $A \ge 0$ or $\mathcal{I}=\mathrm{Fin}$ or 
$\mathcal{J}$ is countably generated.

Then $A$ is $(\mathcal{I}, \mathcal{J})$-regular if and only if:
\begin{enumerate}[label={\rm (\textsc{T}\arabic{*})}]
\item \label{item:ST1} $
\|A\|
< \infty$\textup{;}
\item \label{item:ST2} $\mathcal{J}\text{-}\lim_n \sum_k a_{n,k}=1$\textup{;}
\item \label{item:ST3} $\mathcal{J}\text{-}\lim_n \sum_{k \in E} |a_{n,k}|=0$ for all $E \in \mathcal{I}$\textup{.}
\end{enumerate}
\end{thm}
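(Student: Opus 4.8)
The plan is to prove the two implications separately, noting at the outset that the sufficiency of \ref{item:ST1}--\ref{item:ST3} requires none of the three alternative hypotheses; these enter only in the necessity of \ref{item:ST3}, where sign cancellation must be controlled. For sufficiency, assume \ref{item:ST1}--\ref{item:ST3} and fix $\bm{x} \in c_b(\mathcal{I})$ with $\mathcal{I}\text{-}\lim \bm{x}=\eta$. By \ref{item:ST1} every row of $A$ is absolutely summable, so $A\bm{x}$ is well defined with $\|A\bm{x}\|_\infty \le \|A\|\,\|\bm{x}\|_\infty$, whence $A\bm{x} \in \ell_\infty$. Replacing $\bm{x}$ by $\bm{x}-\eta\bm{1}$ and invoking \ref{item:ST2}, I may assume $\eta=0$. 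Given $\varepsilon>0$, the set $E:=\{k:|x_k|\ge \varepsilon\}$ lies in $\mathcal{I}$, and splitting the sum over $E$ and its complement yields $|A_n\bm{x}| \le \|\bm{x}\|_\infty \sum_{k\in E}|a_{n,k}| + \varepsilon\|A\|$. By \ref{item:ST3} the first summand is $\mathcal{J}$-null, so $\mathcal{J}\text{-}\limsup_n|A_n\bm{x}| \le \varepsilon\|A\|$; letting $\varepsilon\to 0$ gives $\mathcal{J}\text{-}\lim A\bm{x}=0$, as needed.

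For necessity, suppose $A$ is $(\mathcal{I},\mathcal{J})$-regular. Since $c_0\subseteq c_b(\mathcal{I})\subseteq \mathrm{dom}(A)$, convergence of $\sum_k a_{n,k}x_k$ for every $\bm{x}\in c_0$ forces each row into $\ell_1$, so the functional $\bm{x}\mapsto A_n\bm{x}$ is bounded on $c_0$ with norm $\sum_k|a_{n,k}|$. As $A\bm{x}\in c_b(\mathcal{J})\subseteq \ell_\infty$ for every $\bm{x}\in c_0$, this family is pointwise bounded, and Banach--Steinhaus gives \ref{item:ST1}. Evaluating $A$ on the constant sequence $\bm{1}$, with $\mathcal{I}\text{-}\lim\bm{1}=1$, yields \ref{item:ST2}; evaluating $A$ on each unit vector $\bm{e}_k$, with $\mathcal{I}\text{-}\lim\bm{e}_k=0$, yields $\mathcal{J}\text{-}\lim_n a_{n,k}=0$ for every $k$, hence $\mathcal{J}\text{-}\lim_n\sum_{k\in F}|a_{n,k}|=0$ for every \emph{finite} $F$ by finite additivity of the $\mathcal{J}$-limit.

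It remains to establish \ref{item:ST3} for an arbitrary $E\in\mathcal{I}$. Two of the three hypotheses dispose of this at once: if $A\ge 0$ then $\sum_{k\in E}|a_{n,k}|=A_n\bm{1}_E$ and $\mathcal{I}\text{-}\lim\bm{1}_E=0$ gives the claim, while if $\mathcal{I}=\mathrm{Fin}$ then $E$ is finite and the claim is the finite-$F$ case just proved. The substantial case is $\mathcal{J}$ countably generated with $A$ of mixed sign, which I would treat by contradiction through a gliding-hump construction. Assume \ref{item:ST3} fails for some $E\in\mathcal{I}$, so that $G:=\{n:\sum_{k\in E}|a_{n,k}|>\delta\}\in\mathcal{J}^+$ for some $\delta>0$. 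Fix an increasing sequence $(R_m)$ of generators of $\mathcal{J}$, so that $S\in\mathcal{J}^+$ exactly when $S\setminus R_m$ is infinite for every $m$. Recursively choose indices $n_0<n_1<\cdots$ with $n_i\in G\setminus R_i$ and disjoint finite blocks with $\max E_{i-1}<\min E_i$, $E_i\subseteq E$, so that row $n_i$ concentrates its $E$-mass on $E_i$: using \ref{item:ST1} to force the tail $\sum_{k\in E,\,k>\max E_i}|a_{n_i,k}|$ small, and using the finite-$F$ vanishing together with $G\in\mathcal{J}^+$ to select $n_i$ carrying mass below $\delta/4$ on $E_0\cup\cdots\cup E_{i-1}$. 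Putting $x_k:=\operatorname{sgn}(a_{n_i,k})$ for $k\in E_i$ and $x_k:=0$ otherwise gives $\bm{x}\in\ell_\infty$ supported on $E$, so $\mathcal{I}\text{-}\lim\bm{x}=0$, while the block estimates yield $A_{n_i}\bm{x}>\delta/4$ for all $i$.

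The main obstacle is precisely this construction: one must keep the exceptional set $\{n_i:i\in\omega\}$ inside $\mathcal{J}^+$ while separating the chosen rows onto disjoint blocks. This is where countable generation is indispensable, since the condition $n_i\in G\setminus R_i$ forces $n_i\notin R_m$ for all $i\ge m$ and hence $\{n_i\}\setminus R_m$ infinite for every $m$, i.e.\ $\{n_i\}\in\mathcal{J}^+$; consequently $A\bm{x}$ does not $\mathcal{J}$-converge to $0$, contradicting regularity and proving \ref{item:ST3}. For an ideal admitting no such escape sequence — for instance $\mathcal{Z}$ — this step cannot be carried out, which anticipates the failure of the characterization recorded in the abstract.
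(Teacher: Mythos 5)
Your proposal is correct, but note that this paper does not prove Theorem \ref{thm:mainsilvermantoeplitz} at all: it is imported verbatim from \cite[Theorems 1.2 and 1.3]{ConnorLeo} (see also \cite[Corollary 2.11]{Leo22}), so the only comparison available is with that source, whose argument yours essentially reproduces. Your structure matches the known one: sufficiency of \ref{item:ST1}--\ref{item:ST3} for arbitrary ideals by the direct split over $E=\{k:|x_k|\ge\varepsilon\}$; necessity of \ref{item:ST1} via rows in $\ell_1$ plus Banach--Steinhaus on $c_0$; \ref{item:ST2} and the column condition via the test sequences $\bm{1}$ and $\bm{e}_k$; and necessity of \ref{item:ST3} split into the three cases, with the gliding hump reserved for $\mathcal{J}$ countably generated. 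Your key device --- choosing $n_i\in G\setminus R_i$ for an increasing generating sequence $(R_m)$, so that $\{n_i:i\in\omega\}\setminus R_m$ is infinite for every $m$ and hence $\{n_i:i\in\omega\}\in\mathcal{J}^+$ --- is exactly the right use of countable generation, and you correctly observe that sufficiency needs none of the three side hypotheses, consistent with the paper's Remark \ref{rmk:alwaysholds} and the $(\mathcal{Z},\mathcal{Z})$ counterexample to the converse.

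One small imprecision in the hump construction: to make the main term $\sum_{k\in E_i}|a_{n_i,k}|$ large you must select $n_i$ with small mass on the full initial segment $E\cap[0,\max E_{i-1}]$, not merely on $E_0\cup\cdots\cup E_{i-1}$, since points of $E$ lying between blocks would otherwise leak mass out of $E_i$ (they carry $x_k=0$, so they are harmless in the cross terms but not in the lower bound for the main term). This is repaired for free by taking the blocks to be consecutive chunks $E_i=E\cap(\max E_{i-1},M_i]$, in which case the two finite sets coincide and your $\delta/4$ bookkeeping goes through.
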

In the statement above, recall that an ideal $\mathcal{J}$ on $\omega$ is countably generated if there exists a sequence $(Q_k: k \in \omega)$ of subsets of $\omega$ such that $S \in \mathcal{I}$ if and only if $S\subseteq \bigcup_{k \in F}Q_k$ for some finite $F \in \mathrm{Fin}$. Examples of countably generated ideals include $\mathrm{Fin}$ and the isomorphic copies on $\omega$ of $\mathrm{Fin}\times \{\emptyset\}:=\{S\subseteq \omega^2: \exists n\in \omega, S\subseteq [0,n]\times \omega \}$ and  $\mathrm{Fin}\oplus \mathcal{P}(\omega):=\{S\subseteq \{0,1\}\times \omega:|S\cap (\{0\}\times \omega)|<\infty\}$, cf. \cite[Remark 2.16]{Leo22}.  Hereafter, an ideal $\mathcal{I}$ on a countably infinite set $W$ is said to be an isomorphic copy of an ideal $\mathcal{J}$ on $\omega$ if there exists a bijection $f: W \to \omega$ such that $S \in \mathcal{J}$ if and only if $f^{-1}[S] \in \mathcal{I}$ for each $S\subseteq \omega$.)

The above result extends the classical Silverman–Toeplitz characterization, which corresponds to the case $\mathcal{I}=\mathcal{J}=\mathrm{Fin}$. Lastly, it is worth mentioning that Theorem \ref{thm:mainsilvermantoeplitz} does not hold for arbitrarily ideals: indeed, there exists a $(\mathcal{Z}, \mathcal{Z})$-regular which does not satisfy condition \ref{item:ST3}, see \cite[Theorem 1.4]{ConnorLeo}.

\section{Main results}

Given ideals $\mathcal{I}, \mathcal{J}$ on $\omega$, we study the core equality problem
\begin{equation}\label{eq:mainproblemequalitycores}
\mathrm{core}_{A\bm{x}}(\mathcal{J})=\mathrm{core}_{\bm{x}}(\mathcal{I})
\quad \text{ for all sequences }\bm{x} \in \ell_\infty.
\end{equation}
More explicitly, we obtain necessary and sufficient conditions on the entries of $A$ to satisfy equality \eqref{eq:mainproblemequalitycores} and, then, we study the existence of such matrices $A$. 

The first result in this direction has been obtained by Allen \cite{MR0011332}, which provides a characterization of the matrices $A$ which satisfy equality \eqref{eq:mainproblemequalitycores} in the case $\mathcal{I}=\mathcal{J}=\mathrm{Fin}$:
\begin{thm}\label{thm:allen}
Let $A$ be an infinite real matrix and suppose that $\mathcal{I}=\mathcal{J}=\mathrm{Fin}$. 

Then equality \eqref{eq:mainproblemequalitycores} holds if and only if\textup{:}
\begin{enumerate}[label={\rm (\textsc{A}\arabic{*})}]
\item \label{item:Allen1} $A$ is regular\textup{;} 
\item \label{item:Allen2} $\lim_n \sum_k |a_{n,k}|=1$\textup{;}
\item \label{item:Allen3} for each infinite $E\subseteq \omega$, there exists a strictly increasing sequence $(n_i: i \in \omega)$ in $\omega$ such that $\lim_i \sum_{k \in E}a_{n_i,k}=1$
\textup{.}
\end{enumerate}
\end{thm}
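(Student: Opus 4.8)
The plan is to exploit the identity $\mathrm{core}_{\bm{x}}(\mathrm{Fin})=[\liminf_n x_n,\limsup_n x_n]$ (the case $\mathcal{I}=\mathrm{Fin}$ of \eqref{eq:Icorerepresentation}), so that \eqref{eq:mainproblemequalitycores} becomes the pair of scalar identities $\liminf_n A_n\bm{x}=\liminf_n x_n$ and $\limsup_n A_n\bm{x}=\limsup_n x_n$, valid for every $\bm{x}\in\ell_\infty$. Since $A(-\bm{x})=-A\bm{x}$ and $\limsup_n(-y_n)=-\liminf_n y_n$, replacing $\bm{x}$ by $-\bm{x}$ interchanges the two identities; hence it suffices to characterize the matrices for which $\limsup_n A_n\bm{x}=\limsup_n x_n$ for all bounded $\bm{x}$. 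I would split this single identity into the two inequalities $\limsup_n A_n\bm{x}\le\limsup_n x_n$ (the core does not expand) and $\limsup_n A_n\bm{x}\ge\limsup_n x_n$ (the core does not shrink), and match them to \ref{item:Allen1}--\ref{item:Allen2} and to \ref{item:Allen3}, respectively.

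The first inequality, for all bounded $\bm{x}$, is precisely the classical Knopp core theorem, which asserts that it holds if and only if $A$ is regular and $\lim_n\sum_k|a_{n,k}|=1$, that is, iff \ref{item:Allen1}--\ref{item:Allen2}. For sufficiency one estimates $A_n\bm{x}$ directly after splitting each row into its positive and negative parts; for the necessity of \ref{item:Allen2} one needs a gliding-hump construction, building a sequence $\bm{x}\in\{-1,1\}^\omega$ whose signs match those of a suitable subsequence of rows along disjoint blocks, so that $\limsup_n A_n\bm{x}$ reaches $\limsup_n\sum_k|a_{n,k}|$, which must therefore not exceed $\limsup_n x_n\le 1$. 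Regularity itself, i.e. \ref{item:Allen1}, is forced by testing \eqref{eq:mainproblemequalitycores} on convergent $\bm{x}$, whose core is the singleton $\{\lim_n x_n\}$.

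For the reverse inequality I would argue as follows. For the necessity of \ref{item:Allen3}, given an infinite $E\subseteq\omega$ let $\bm{1}_E$ be its indicator sequence; then $\limsup_n(\bm{1}_E)_n=1$, so the identity gives $\limsup_n\sum_{k\in E}a_{n,k}=1$, and since \ref{item:Allen2} already bounds this quantity above by $1$, some strictly increasing $(n_i)$ realizes the limit $1$, which is exactly \ref{item:Allen3}. For sufficiency, given bounded $\bm{x}$ with $\beta:=\limsup_n x_n$ and $\varepsilon>0$, I apply \ref{item:Allen3} to the infinite set $E:=\{k:x_k>\beta-\varepsilon\}$ to obtain $(n_i)$ with $\sum_{k\in E}a_{n_i,k}\to1$. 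The crucial observation is that \ref{item:Allen2} together with this forces the mass of $A$ to concentrate on $E$ with the correct sign: both $\sum_{k\notin E}|a_{n_i,k}|$ and the negative part $\sum_{k\in E}a_{n_i,k}^-$ tend to $0$, while $\sum_{k\in E}a_{n_i,k}^+\to1$. Splitting $A_{n_i}\bm{x}$ accordingly and bounding the off-$E$ and negative contributions by $\|\bm{x}\|_\infty\,o(1)$ yields $\liminf_i A_{n_i}\bm{x}\ge\beta-\varepsilon$, whence $\limsup_n A_n\bm{x}\ge\beta-\varepsilon$; letting $\varepsilon\downarrow0$ closes the argument.

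The main obstacle throughout is the absence of any sign assumption on $A$: every estimate must decompose the rows into positive and negative parts and certify that the wrong-sign mass is asymptotically negligible. What makes this possible is the sharp value $1$ in \ref{item:Allen2}, not mere finiteness of $\|A\|$, since it is exactly this that forces $\sum_k|a_{n,k}|$ and $\sum_k a_{n,k}$ to agree in the limit and, on the relevant subsequences, pins all the mass onto $E$ with nonnegative weights summing to $1$. The other technically delicate point is the gliding-hump construction underlying the necessity of \ref{item:Allen2}, which requires selecting the blocks using the column-wise and tail smallness of the rows guaranteed by regularity.
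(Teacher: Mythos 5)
Your proposal is correct, and it takes a genuinely different (more self-contained) route than the paper, which never proves Theorem \ref{thm:allen} directly: it cites Allen and recovers the statement as the case $\mathcal{I}=\mathcal{J}=\mathrm{Fin}$ of Theorem \ref{thm:countably generated}, whose condition \ref{item:Leo2} reads $\limsup_n\sum_{k\in E}|a_{n,k}|=1$ and is equivalent to \ref{item:Allen3} only modulo \ref{item:Allen1}--\ref{item:Allen2} (as remarked right after the theorem). The key divergence is in the necessity of the set condition: because \ref{item:Allen3} is stated \emph{without} absolute values, your test sequence $\bm{1}_E$ suffices, giving $\limsup_n\sum_{k\in E}a_{n,k}=1$ at once; the paper, needing the absolute-value version \ref{item:Leo2} for a general countably generated $\mathcal{J}$, must instead invoke the $\{-1,0,1\}$-valued sequence supported on $E$ from \cite[Corollary 4.3]{Leo22} together with a cluster-point argument for $|A\bm{x}|$ --- machinery your Fin-specific argument bypasses, though a gliding hump of the same flavor reappears inside your black-boxed Knopp core characterization (necessity of \ref{item:Allen2} via sign-matching $\pm 1$ blocks, which needs the columnwise and tail smallness you correctly flag). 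Your sufficiency argument is essentially the paper's \textsc{If part} specialized to $\mathcal{J}=\mathrm{Fin}$: the same decomposition into $a_{n,k}^{\pm}$ and the same concentration estimates ($\sum_{k\notin E}|a_{n_i,k}|\to 0$, $\sum_{k\in E}a_{n_i,k}^-\to 0$, $\sum_{k\in E}a_{n_i,k}^+\to 1$), except that working along a subsequence $(n_i)$ with actual limits lets you avoid the paper's translation trick (replacing $\bm{x}$ by $\bm{x}+\kappa\bm{1}_\omega$ to assume $\eta>0$), since $(\beta-\varepsilon)\sum_{k\in E}a_{n_i,k}^+\to\beta-\varepsilon$ regardless of sign; in the ideal setting the paper must instead juggle a $\mathcal{J}$-positive set $S$ and a set $S'\in\mathcal{J}^\star$, which is what your subsequence formulation collapses to when $\mathcal{J}=\mathrm{Fin}$. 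In short: your modular proof (classical Knopp core theorem for the non-expansion half, indicator test plus mass concentration for the non-shrinking half) buys elementarity for the Fin case, while the paper's unified proof buys the generalization to countably generated $\mathcal{J}$, where indicator tests genuinely fail to control $\sum_{k\in E}|a_{n,k}|$.
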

Note that, taking into account \ref{item:Allen2}, condition \ref{item:Allen3} can be rewritten equivalently as $\limsup_n \sum_{k \in E}|a_{n,k}|=1$ for all infinite $E\subseteq \omega$. In addition, condition \ref{item:Allen2} implies that $A$ maps bounded sequences into bounded sequences, i.e., $A \in (\ell_\infty, \ell_\infty)$. 

A partial extension of the result above has been obtained by Connor, Fridy, and Orhan in the case where all the entries of $A$ are nonnegative, see \cite[Theorem 2.1]{MR2241135}.
\begin{thm}\label{thm:connorfridyorhan}
Let $A\in (\ell_\infty, \ell_\infty)$ be an infinite real matrix, let $\mathcal{I}$, $\mathcal{J}$ be ideals on $\omega$, and suppose that $A\ge 0$. 

Then equality \eqref{eq:mainproblemequalitycores} holds if and only if\textup{:}
\begin{enumerate}[label={\rm (\textsc{C}\arabic{*})}]
\item \label{item:Connor1} $A$ is $(\mathcal{I}, \mathcal{J})$-regular\textup{;}
\item \label{item:Connor2} $\mathcal{J}\text{-}\limsup_n \sum_{k \in E} a_{n,k}=1$ for all $E \in \mathcal{I}^+$\textup{.}
\end{enumerate}
\end{thm}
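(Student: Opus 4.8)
The plan is to prove Theorem~\ref{thm:connorfridyorhan} by establishing the two implications separately, always working with the interval representation \eqref{eq:Icorerepresentation}, so that equality \eqref{eq:mainproblemequalitycores} becomes the conjunction
\[
\mathcal{J}\text{-}\liminf A\bm{x} = \mathcal{I}\text{-}\liminf \bm{x}
\quad\text{and}\quad
\mathcal{J}\text{-}\limsup A\bm{x} = \mathcal{I}\text{-}\limsup \bm{x}
\quad\text{for all }\bm{x}\in \ell_\infty.
\]
Since $A\ge 0$, replacing $\bm{x}$ by $-\bm{x}$ swaps $\liminf$ with $\limsup$ on both sides, so it suffices to treat the $\limsup$ identity alone; the $\liminf$ identity follows by this symmetry. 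First I would record the reduction that lets us pass freely between cores and one-sided limits, and note that \ref{item:Connor1} already forces $\|A\|<\infty$ via Theorem~\ref{thm:mainsilvermantoeplitz}\ref{item:ST1}, so all transformed sequences are bounded and the quantities above are finite.

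For sufficiency, I would assume \ref{item:Connor1} and \ref{item:Connor2} and prove $\mathcal{J}\text{-}\limsup A\bm{x}=\mathcal{I}\text{-}\limsup \bm{x}$ for a fixed $\bm{x}\in\ell_\infty$. The inequality $\mathcal{J}\text{-}\limsup A\bm{x}\le \mathcal{I}\text{-}\limsup\bm{x}$ is the ``regularity'' direction: writing $\beta:=\mathcal{I}\text{-}\limsup\bm{x}$, for any $\varepsilon>0$ the set $E:=\{k:x_k>\beta+\varepsilon\}$ lies in $\mathcal{I}$, so splitting the row sum $A_n\bm{x}=\sum_{k\in E}a_{n,k}x_k+\sum_{k\notin E}a_{n,k}x_k$ and controlling the first piece by \ref{item:ST3} (using $A\ge 0$ and boundedness of $\bm{x}$) while bounding the second by $(\beta+\varepsilon)\sum_k a_{n,k}$ and invoking \ref{item:ST2} yields $\mathcal{J}\text{-}\limsup A\bm{x}\le \beta+\varepsilon$. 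The reverse inequality is where \ref{item:Connor2} enters: I would choose a set $E\in\mathcal{I}^+$ on which $\bm{x}$ stays near $\beta$ (such a set exists since $\beta$ is the largest $\mathcal{I}$-cluster point and one may intersect $\{k:x_k>\beta-\varepsilon\}$, an $\mathcal{I}$-positive set, with a suitable tail), and then use \ref{item:Connor2} to find, along a $\mathcal{J}$-positive set of rows $n$, that $\sum_{k\in E}a_{n,k}$ is close to $1$; concentrating the mass of $A_n\bm{x}$ on $E$ forces $A_n\bm{x}$ close to $\beta$ along those rows, giving $\mathcal{J}\text{-}\limsup A\bm{x}\ge\beta-\varepsilon$.

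For necessity, I would assume \eqref{eq:mainproblemequalitycores} holds and derive both conditions. Applying the core equality to constant sequences and to $\mathcal{I}$-convergent sequences pins down $\mathcal{J}\text{-}\lim A\bm{x}=\mathcal{I}\text{-}\lim\bm{x}$ on $c_b(\mathcal{I})$, and together with $A\in(\ell_\infty,\ell_\infty)$ this yields \ref{item:Connor1}. For \ref{item:Connor2}, fix $E\in\mathcal{I}^+$ and test with the indicator $\bm{x}=\mathbf{1}_E$: since $E$ is $\mathcal{I}$-positive, $1$ is an $\mathcal{I}$-cluster point of $\bm{x}$, so $\mathcal{I}\text{-}\limsup\bm{x}=1$, whence $\mathcal{J}\text{-}\limsup A\bm{x}=\mathcal{J}\text{-}\limsup_n\sum_{k\in E}a_{n,k}$ must equal $1$; the inequality $\le 1$ already follows from the $\limsup$-regularity direction applied to this $\bm{x}$, so the core equality forces exactly \ref{item:Connor2}.

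The main obstacle I anticipate is the reverse inequality in the sufficiency argument, specifically the construction of the $\mathcal{I}$-positive ``concentration set'' $E$ on which $\bm{x}$ is uniformly near $\beta$ and simultaneously the verification that \ref{item:Connor2} can be applied to \emph{this particular} $E$ rather than merely to some abstract $\mathcal{I}$-positive set. One must ensure the set witnessing $\beta=\mathcal{I}\text{-}\limsup\bm{x}$ remains $\mathcal{I}$-positive after trimming away indices where the row mass leaks outside $E$, and here the nonnegativity $A\ge 0$ is essential because it prevents cancellation in the row sums and lets the estimate $|A_n\bm{x}-\beta|\le \varepsilon + \|\bm{x}\|_\infty\,|{\sum_k a_{n,k}}-\sum_{k\in E}a_{n,k}|$ go through cleanly.
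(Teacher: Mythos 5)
Your proposal is correct and follows essentially the same route as the paper, which obtains this theorem as a byproduct of the proof of Theorem~\ref{thm:countably generated} via Remark~\ref{rmk:generalizationifonlyif}: necessity by testing sequences in $c_b(\mathcal{I})$ and the indicators $\bm{1}_E$ (admissible precisely because $A\ge 0$), sufficiency by the identical row-splitting over the concentration sets $\{k: x_k\ge \eta-\delta\}\in\mathcal{I}^+$ and $\{k: x_k\le \eta+\delta\}\in\mathcal{I}^\star$ together with the symmetry $\bm{x}\mapsto -\bm{x}$ — and your anticipated obstacle is in fact vacuous, since \ref{item:Connor2} is assumed for \emph{every} $E\in\mathcal{I}^+$, hence in particular for the constructed one. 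The only detail to repair is the estimate $\sum_{k\notin E}a_{n,k}x_k\le(\beta+\varepsilon)\sum_k a_{n,k}$, which reverses when $\beta+\varepsilon<0$; the paper sidesteps this by first translating $\bm{x}$ by a constant so that the relevant limit is positive (legitimate, since both cores shift by the same constant under $(\mathcal{I},\mathcal{J})$-regularity), and you should do the same or absorb the correction term $|\beta+\varepsilon|\sum_{k\in E}a_{n,k}$, which is $\mathcal{J}$-null by \ref{item:ST3}.
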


Our first main result removes the hypotheses that the entries of $A$ are nonnegative and, on the other hand, it requires that $\mathcal{J}$ is countably generated. Hence it provides a generalization of Theorem \ref{thm:allen}.
\begin{thm}\label{thm:countably generated}
Let $A\in (\ell_\infty, \ell_\infty)$ be an infinite real matrix, let $\mathcal{I}$, $\mathcal{J}$ be ideals on $\omega$, and suppose that $\mathcal{J}$ is countably generated. 

Then equality \eqref{eq:mainproblemequalitycores} holds if and only if\textup{:}
\begin{enumerate}[label={\rm (\textsc{L}\arabic{*})}]
\item \label{item:Leo1} $A$ is $(\mathcal{I}, \mathcal{J})$-regular\textup{;}
\item \label{item:Leo2} $\mathcal{J}\text{-}\limsup_n \sum_{k \in E} |a_{n,k}|=1$ for all $E \in \mathcal{I}^+$\textup{.}
\end{enumerate}
\end{thm}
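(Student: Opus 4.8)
The plan is to combine the representation \eqref{eq:Icorerepresentation} of the core with the already established nonnegative case. Through \eqref{eq:Icorerepresentation}, the core equality \eqref{eq:mainproblemequalitycores} amounts to the two identities $\mathcal{J}\text{-}\limsup A\bm{x}=\mathcal{I}\text{-}\limsup \bm{x}$ and $\mathcal{J}\text{-}\liminf A\bm{x}=\mathcal{I}\text{-}\liminf \bm{x}$ for every $\bm{x}\in\ell_\infty$, the second being the first applied to $-\bm{x}$. For the sufficiency I would reduce to Theorem~\ref{thm:connorfridyorhan} by passing to the positive part. Writing $a_{n,k}=a_{n,k}^+-a_{n,k}^-$, I first show that the negative parts are $\mathcal{J}$-negligible: from $\mathcal{J}\text{-}\lim_n\sum_k a_{n,k}=1$ (by \ref{item:Leo1} and Theorem~\ref{thm:mainsilvermantoeplitz}\ref{item:ST2}), from \ref{item:Leo2} at $E=\omega$, and from $\sum_k|a_{n,k}|=\sum_k a_{n,k}+2\sum_k a_{n,k}^-$, one gets $\mathcal{J}\text{-}\lim_n\sum_k a_{n,k}^-=0$. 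Hence $A\bm{x}$ and $A^+\bm{x}$ differ by a sequence $\mathcal{J}$-convergent to $0$, so they share the same $\mathcal{J}$-core.

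It then remains to verify that the nonnegative matrix $A^+$ satisfies the hypotheses \ref{item:Connor1} and \ref{item:Connor2} of Theorem~\ref{thm:connorfridyorhan}. Regularity of $A^+$ follows from Theorem~\ref{thm:mainsilvermantoeplitz}, since its conditions \ref{item:ST1}--\ref{item:ST3} for $A^+$ are inherited from those of $A$ using $\mathcal{J}\text{-}\lim_n\sum_k a_{n,k}^-=0$ together with $\sum_{k\in E}a_{n,k}^+\le\sum_{k\in E}|a_{n,k}|$; and \ref{item:Connor2} holds because $\sum_{k\in E}a_{n,k}^+$ differs from $\sum_{k\in E}|a_{n,k}|$ only by $\sum_{k\in E}a_{n,k}^-\le\sum_k a_{n,k}^-$, a $\mathcal{J}$-null quantity, so \ref{item:Leo2} transfers. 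Theorem~\ref{thm:connorfridyorhan} then gives $\mathrm{core}_{A^+\bm{x}}(\mathcal{J})=\mathrm{core}_{\bm{x}}(\mathcal{I})$, and hence the same equality for $A$.

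For the necessity, assume \eqref{eq:mainproblemequalitycores}. Condition \ref{item:Leo1} is immediate: if $\mathcal{I}\text{-}\lim\bm{x}=\eta$ then $\mathrm{core}_{\bm{x}}(\mathcal{I})=\{\eta\}$, so $\mathrm{core}_{A\bm{x}}(\mathcal{J})=\{\eta\}$, i.e. $\mathcal{J}\text{-}\lim A\bm{x}=\eta$; together with $A\in(\ell_\infty,\ell_\infty)$ this is exactly $(\mathcal{I},\mathcal{J})$-regularity. For the lower bound in \ref{item:Leo2}, fix $E\in\mathcal{I}^+$ and test with $\bm{x}=\mathbf{1}_E$: since $E\notin\mathcal{I}$ one has $\mathcal{I}\text{-}\limsup\mathbf{1}_E=1$, so \eqref{eq:mainproblemequalitycores} forces $\mathcal{J}\text{-}\limsup_n\sum_{k\in E}a_{n,k}=1$, and $\sum_{k\in E}a_{n,k}\le\sum_{k\in E}|a_{n,k}|$ yields $\mathcal{J}\text{-}\limsup_n\sum_{k\in E}|a_{n,k}|\ge 1$.

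The crux, and the only place where the hypothesis that $\mathcal{J}$ is countably generated is used, is the reverse inequality $\mathcal{J}\text{-}\limsup_n\sum_{k\in E}|a_{n,k}|\le 1$. Arguing by contradiction, suppose this $\mathcal{J}$-limsup exceeds some $c>1$, so that $N:=\{n:\sum_{k\in E}|a_{n,k}|>c\}\in\mathcal{J}^+$. Fixing increasing generators $Q_0\subseteq Q_1\subseteq\cdots$ of $\mathcal{J}$, I would build by a gliding hump a single $\bm{x}$ with $\|\bm{x}\|_\infty\le 1$ supported on $E$ and a sequence $n_1<n_2<\cdots$ in $N$ with $A_{n_i}\bm{x}>1$ for large $i$. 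At stage $i$, the finite set $F_i:=E\cap[0,k_{i-1})$ lies in $\mathcal{I}$, so Theorem~\ref{thm:mainsilvermantoeplitz}\ref{item:ST3} makes $\{n:\sum_{k\in F_i}|a_{n,k}|\ge\varepsilon_i\}\in\mathcal{J}$; I pick $n_i\in N$ outside this set and outside $Q_i$ (possible as $N\in\mathcal{J}^+$), choose $k_i$ with $\sum_{k\in E,\,k\ge k_i}|a_{n_i,k}|<\varepsilon_i$, and set $x_k=\operatorname{sign}(a_{n_i,k})$ for $k\in E\cap[k_{i-1},k_i)$. Then $A_{n_i}\bm{x}\ge\sum_{k\in E}|a_{n_i,k}|-4\varepsilon_i>c-4\varepsilon_i>1$ for large $i$. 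The decisive point is that taking $n_i\notin Q_i$ forces $\{n_i:i\ge j\}\cap Q_j=\emptyset$ for every $j$, whence $\{n_i:i\in\omega\}\in\mathcal{J}^+$; therefore $\{n:A_n\bm{x}>1\}\in\mathcal{J}^+$ and $\mathcal{J}\text{-}\limsup A\bm{x}>1=\mathcal{I}\text{-}\limsup\bm{x}$, contradicting \eqref{eq:mainproblemequalitycores}. The main obstacle is exactly this simultaneous control, namely aligning the signs of one fixed $\bm{x}$ with $\mathcal{J}$-many rows at once while keeping the chosen rows $\mathcal{J}$-positive; countable generation is what makes it possible, and its absence is what underlies the failure for $\mathcal{I}=\mathcal{J}=\mathcal{Z}$ noted in the abstract.
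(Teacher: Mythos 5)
Your proposal is correct, but it takes a genuinely different route from the paper in both directions. For the \textsc{If part}, the paper never invokes Theorem~\ref{thm:connorfridyorhan}: after deriving $\mathcal{J}\text{-}\lim_n\sum_k a_{n,k}^-=0$ exactly as you do, it runs a direct $\varepsilon$--$\delta$ estimate on the level sets $E=\{k: x_k\ge \eta-\delta\}\in\mathcal{I}^+$ and $E'=\{k: x_k\le \eta+\delta\}\in\mathcal{I}^\star$ to show $\mathcal{J}\text{-}\limsup A\bm{x}=\mathcal{I}\text{-}\limsup\bm{x}$; this has the advantage, recorded in Remark~\ref{rmk:generalizationifonlyif}, that the sufficiency needs \emph{no} hypothesis on $\mathcal{J}$ and recovers Theorem~\ref{thm:connorfridyorhan} as a byproduct, whereas your reduction inverts that dependence by consuming \cite{MR2241135} as a black box, and moreover, as you wrote it (inheriting \ref{item:ST3} for $A$ from Theorem~\ref{thm:mainsilvermantoeplitz}), it spends countable generation in the sufficiency too --- though you could avoid this by testing $\bm{1}_E$ directly to get $\mathcal{J}\text{-}\lim_n\sum_{k\in E}a^+_{n,k}=0$ for $E\in\mathcal{I}$ from regularity plus the null negative parts. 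For the \textsc{Only If part}, the paper cites \cite[Corollary 4.3]{Leo22} to produce a $\{-1,0,1\}$-valued $\bm{x}$ supported on $E$ satisfying \eqref{eq:equalitylimsup}, then evaluates both sides via the cluster-point identity $\Gamma_{h(A\bm{x})}(\mathcal{J})=h[\Gamma_{A\bm{x}}(\mathcal{J})]$ for $h=|\cdot|$; your gliding hump --- choosing $n_i\in N$ outside increasing generators $Q_i$ so that $\{n_i: i\ge j\}\cap Q_j=\emptyset$, which keeps $\{n_i: i\in\omega\}\in\mathcal{J}^+$ --- is in effect a self-contained proof of the needed instance of that corollary, and your split of \ref{item:Leo2} into the lower bound (tested on $\bm{1}_E$) and the upper bound (by contradiction) is more elementary than the paper's one-shot cluster-point computation. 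Two small repairs to your construction: the assertion $\mathcal{I}\text{-}\limsup\bm{x}=1$ is unwarranted, since the support of your $\bm{x}$ may well lie in $\mathcal{I}$; all you have, and all you need, is $\mathcal{I}\text{-}\limsup\bm{x}\le\|\bm{x}\|_\infty\le 1<\mathcal{J}\text{-}\limsup A\bm{x}$, which already contradicts \eqref{eq:mainproblemequalitycores} via \eqref{eq:Icorerepresentation}. Also, fix $\varepsilon_i\downarrow 0$ with $4\varepsilon_i<c-1$, and add the finite set $[0,n_{i-1}]$ to the avoided $\mathcal{J}$-set at stage $i$ so that the rows $n_i$ are genuinely increasing; both are routine.
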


The proof of Theorem \ref{thm:countably generated} recovers also Theorem \ref{thm:connorfridyorhan}, see Remark \ref{rmk:generalizationifonlyif} below. 
In addition, the above characterization does \emph{not} hold without any constraint on the ideals $\mathcal{I}, \mathcal{J}$. Indeed, it fails for $\mathcal{I}=\mathcal{J}=\mathcal{Z}$, see Remark \ref{rmk:alwaysholds}.

At this point, another result by Connor, Fridy, and Orhan proves that there are \emph{no} regular matrices $A$ satisfying equality \eqref{eq:mainproblemequalitycores} if $\mathcal{I}=\mathcal{Z}$ and $\mathcal{J}=\mathrm{Fin}$, see \cite[Theorem 2.4]{MR2241135}. Accordingly, the authors left as open question whether there exist distinct ideals $\mathcal{I}, \mathcal{J}$ on $\omega$ for which equality \eqref{eq:mainproblemequalitycores} holds for some matrix $A \in (\ell_\infty, \ell_\infty)$. Next, we show that the answer is affirmative. To this aim, recall that an ideal $\mathcal{I}$ is \emph{Rudin--Keisler below} an ideal $\mathcal{J}$, shortened as $\mathcal{I}\le_{\mathrm{RK}} \mathcal{J}$, if there exists a map $h: \omega \to \omega$ such that $S \in \mathcal{I}$ if and only if $h^{-1}[S] \in \mathcal{J}$. 

\begin{thm}\label{thm:RK}
Let $\mathcal{I}$, $\mathcal{J}$ be ideals on $\omega$ such that $\mathcal{I}\le_{\mathrm{RK}} \mathcal{J}$. Then there exists an infinite real matrix $A \in (\ell_\infty, \ell_\infty)$ which satisfies equality \eqref{eq:mainproblemequalitycores}.
\end{thm}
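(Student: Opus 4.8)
The plan is to exhibit an explicit matrix, namely the substitution matrix induced by a Rudin--Keisler witness. Let $h\colon \omega \to \omega$ witness $\mathcal{I}\le_{\mathrm{RK}}\mathcal{J}$, so that $S \in \mathcal{I}$ if and only if $h^{-1}[S] \in \mathcal{J}$ for every $S\subseteq \omega$. Define $A=(a_{n,k})$ by $a_{n,k}=1$ if $k=h(n)$ and $a_{n,k}=0$ otherwise, so that $(A\bm{x})_n=x_{h(n)}$ for each $n$. Since every row has a single nonzero entry, equal to $1$, we have $\|A\|=1<\infty$ and $A\bm{x}\in \ell_\infty$ whenever $\bm{x}\in \ell_\infty$; hence $A \in (\ell_\infty,\ell_\infty)$, as required.

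Next I would reduce the core equality to an equality of the underlying sets of ideal cluster points. By the representation \eqref{eq:Icorerepresentation} together with the identity $\mathrm{core}_{\bm{x}}(\mathcal{I})=\mathrm{co}(\Gamma_{\bm{x}}(\mathcal{I}))$, it suffices to prove that $\Gamma_{A\bm{x}}(\mathcal{J})=\Gamma_{\bm{x}}(\mathcal{I})$ for every bounded $\bm{x}$: both cores are then the convex hull of one and the same nonempty compact subset of $\mathbf{R}$, so they coincide and \eqref{eq:mainproblemequalitycores} follows.

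To establish this set equality, fix $\eta \in \mathbf{R}$ and a neighborhood $U$ of $\eta$, and observe that $\{n \in \omega: (A\bm{x})_n \in U\}=\{n: x_{h(n)}\in U\}=h^{-1}\!\left[\{k: x_k\in U\}\right]$. The defining equivalence of the Rudin--Keisler map, read in contrapositive form, gives $S \in \mathcal{I}^+$ if and only if $h^{-1}[S]\in \mathcal{J}^+$ for every $S\subseteq \omega$. Applying this with $S=\{k: x_k\in U\}$ shows that $\{n: (A\bm{x})_n\in U\}\in \mathcal{J}^+$ if and only if $\{k: x_k\in U\}\in \mathcal{I}^+$. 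Letting $U$ range over all neighborhoods of $\eta$, this says precisely that $\eta \in \Gamma_{A\bm{x}}(\mathcal{J})$ if and only if $\eta \in \Gamma_{\bm{x}}(\mathcal{I})$, which is the desired equality of cluster-point sets.

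The argument is short because the substitution matrix converts membership questions about $A\bm{x}$ directly into preimage questions about $\bm{x}$, and the Rudin--Keisler condition is exactly what transfers $\mathcal{J}$-positivity of the preimage to $\mathcal{I}$-positivity of the original set. I expect the only point genuinely requiring care to be the recognition that it is the \emph{bidirectional} RK equivalence that is used: one implication yields $\Gamma_{A\bm{x}}(\mathcal{J})\subseteq \Gamma_{\bm{x}}(\mathcal{I})$ and the other the reverse inclusion. Notably, this route bypasses the regularity and summability conditions appearing in Theorems \ref{thm:connorfridyorhan} and \ref{thm:countably generated}, and it imposes no countable-generation hypothesis on $\mathcal{J}$, so it applies to arbitrary ideals related by $\le_{\mathrm{RK}}$.
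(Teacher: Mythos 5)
Your proposal is correct and is essentially the paper's own proof: the same substitution matrix $a_{n,k}=\bm{1}_{\{k=h(n)\}}$, the same observation that $A_n\bm{x}=x_{h(n)}$ turns $\{n: A_n\bm{x}\in U\}$ into $h^{-1}[\{k: x_k\in U\}]$, the same use of the bidirectional RK equivalence to get $\Gamma_{A\bm{x}}(\mathcal{J})=\Gamma_{\bm{x}}(\mathcal{I})$, and the same conclusion via $\mathrm{core}_{\bm{x}}(\mathcal{I})=\mathrm{co}(\Gamma_{\bm{x}}(\mathcal{I}))$. No gaps; nothing further needed.
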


It is worth to recall that, if $\mathcal{J}$ is a $P$-ideal (that is, for all increasing sequences $(J_n)$ in $\mathcal{J}$ there exists $J \in \mathcal{J}$ such that $J_n\setminus J \in \mathrm{Fin}$ for all $n \in \omega$) and $\mathcal{I}\le_{\mathrm{RK}} \mathcal{J}$, then it is possible to choose a witnessing function $h$ which is finite-to-one, see \cite[Proposition 1.3.1]{MR1711328}. Rudin–Keisler ordering and the latter stronger variant (known as Rudin--Blass ordering) on the maximal ideals are extensively studied in the literature, cf. \cite[Section 1.3]{MR1711328} and references therein. 
Examples of (distinct) ideals $\mathcal{I}, \mathcal{J}$ on $\omega$ such that $\mathcal{I}\le_{\mathrm{RK}} \mathcal{J}$ are abundant. For instance, it is known that $\mathrm{Fin} \le_{\mathrm{RK}} \mathcal{J}$ for all meager ideals $\mathcal{J}$, see \cite[Corollary 3.10.2]{MR1711328}. In addition, $\mathcal{I}\le_{\mathrm{RK}} \mathcal{J}$ for all Erd{\H o}s--Ulam ideals $\mathcal{I}, \mathcal{J}$ (where an ideal $\mathcal{H}$ is said to be Erd{\H o}s--Ulam if there exists a nonnegative real sequence $(h_n)$ such that $\sum_n h_n=\infty$, $h_n=o(\sum_{k\le n}h_k)$ as $n\to \infty$, and $S \in \mathcal{H}$ if and only if $\sum_{k \in S, k\le n}h_k=o(\sum_{k\le n}h_k)$ as $n\to \infty$), see \cite[Lemma 1.13.10]{MR1711328}, cf. also \cite[Corollary 1]{MR3771234}. Other examples can be found within the class of summable ideals, see \cite[Section 1.12]{MR1711328}. 

Our last result extends the latter \cite[Theorem 2.4]{MR2241135} by finding all ideals $\mathcal{I}$ for which equality \eqref{eq:mainproblemequalitycores} holds with $\mathcal{J}=\mathrm{Fin}$ and some matrix $A$:
\begin{thm}\label{thm:JFin}
There exists an infinite real matrix $A \in (\ell_\infty, \ell_\infty)$ satisfying equality \eqref{eq:mainproblemequalitycores} with $\mathcal{J}=\mathrm{Fin}$ if and only if $\mathcal{I}=\mathrm{Fin}$ or $\mathcal{I}$ is an isomorphic copy of $\mathrm{Fin}\oplus \mathcal{P}(\omega)$ on $\omega$. 
\end{thm}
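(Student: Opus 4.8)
The plan is to handle the two implications separately, using Theorem~\ref{thm:countably generated} (applicable since $\mathcal{J}=\mathrm{Fin}$ is countably generated) to translate the core equality \eqref{eq:mainproblemequalitycores} into conditions on the entries of $A$. If \eqref{eq:mainproblemequalitycores} holds with $\mathcal{J}=\mathrm{Fin}$, then $A$ is $(\mathcal{I},\mathrm{Fin})$-regular and $\limsup_n\sum_{k\in E}|a_{n,k}|=1$ for every $E\in\mathcal{I}^+$; feeding $(\mathcal{I},\mathrm{Fin})$-regularity into Theorem~\ref{thm:mainsilvermantoeplitz} gives $\|A\|<\infty$, $\lim_n\sum_k a_{n,k}=1$, and $\lim_n\sum_{k\in E}|a_{n,k}|=0$ for all $E\in\mathcal{I}$. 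Taking $E=\omega$ in the $\limsup$ condition and combining with $\lim_n\sum_k a_{n,k}=1$ yields $\lim_n\sum_k|a_{n,k}|=1$, hence $\sum_k\max\{-a_{n,k},0\}\to 0$; writing $\mu_n(E):=\sum_{k\in E}|a_{n,k}|$, the signed and unsigned row sums over any $E$ therefore agree up to $o(1)$ uniformly in $E$. The conditions then read $E\in\mathcal{I}\iff\lim_n\mu_n(E)=0$, while every $E\in\mathcal{I}^+$ satisfies $\limsup_n\mu_n(E)=1$, giving the $0$–$1$ law $\limsup_n\mu_n(E)\in\{0,1\}$ for every $E\subseteq\omega$.

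For the backward implication I would exhibit explicit matrices. If $\mathcal{I}=\mathrm{Fin}$, the identity matrix satisfies $A\bm{x}=\bm{x}$ and hence \eqref{eq:mainproblemequalitycores} trivially. If $\mathcal{I}$ is an isomorphic copy of $\mathrm{Fin}\oplus\mathcal{P}(\omega)$, there is a partition $\omega=W_0\sqcup W_1$ into infinite sets with $S\in\mathcal{I}\iff S\cap W_0$ finite; writing $W_0=\{w_0<w_1<\cdots\}$, I take the selection matrix $a_{n,k}=1$ if $k=w_n$ and $a_{n,k}=0$ otherwise. Then $(A\bm{x})_n=x_{w_n}$, so $\mathrm{core}_{A\bm{x}}(\mathrm{Fin})=[\liminf_n x_{w_n},\limsup_n x_{w_n}]$; since $\eta\in\Gamma_{\bm{x}}(\mathcal{I})$ iff $\{n:x_n\in U\}\cap W_0$ is infinite for every neighbourhood $U$ of $\eta$, the $\mathcal{I}$-cluster points of $\bm{x}$ are exactly the ordinary cluster points of $(x_n:n\in W_0)$, and \eqref{eq:Icorerepresentation} identifies this with $\mathrm{core}_{\bm{x}}(\mathcal{I})$. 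Alternatively one verifies \ref{item:Leo1} and \ref{item:Leo2} directly for this $A$.

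The forward implication is where the difficulty lies, and I would argue by contraposition: assuming $\mathcal{I}\neq\mathrm{Fin}$ is \emph{not} an isomorphic copy of $\mathrm{Fin}\oplus\mathcal{P}(\omega)$, I would construct a bounded $\bm{x}$ violating \eqref{eq:mainproblemequalitycores}. The negation of the admissible class is exactly the failure of $\mathcal{I}$ to be generated over $\mathrm{Fin}$ by a single set; equivalently $(\mathcal{I},\subseteq^\star)$ has no largest element, so I can fix a tower $E_1\subseteq E_2\subseteq\cdots$ in $\mathcal{I}$ with each difference $F_j:=E_j\setminus E_{j-1}$ infinite and with $G:=\bigcup_j F_j\in\mathcal{I}^+$. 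By the reformulation above $\lim_n\mu_n(E_j)=0$ for every fixed $j$, whereas $\limsup_n\mu_n(G)=1$, so along suitable rows the mass of $A$ concentrates but only on arbitrarily high levels $F_j$. The strategy is then to assign $\bm{x}$ a value $c_j$ on each block $F_j$ (and a value on $\omega\setminus G$), chosen so that the $\mathcal{I}$-core of $\bm{x}$ is a single point while the Knopp core of $A\bm{x}$ provably contains a second value produced by the high-level escaping mass.

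The main obstacle is precisely this calibration. Since the mass escapes to levels $F_j$ with $j\to\infty$, any weighting of the $c_j$ that decays in $j$ is annihilated in $A\bm{x}$ (a geometric choice $c_j=1-2^{-j}$, for instance, yields $\mathrm{core}_{A\bm{x}}(\mathrm{Fin})=\{1\}$ as well, and fails); the values must therefore oscillate across levels \emph{without} creating spurious $\mathcal{I}$-cluster points of $\bm{x}$. Pinning down which sub-unions $\bigcup_{j\in T}F_j$ are $\mathcal{I}$-positive—so as to control $\mathrm{core}_{\bm{x}}(\mathcal{I})$ exactly while keeping $\mathrm{core}_{A\bm{x}}(\mathrm{Fin})$ strictly larger—is the delicate combinatorial heart of the argument, and is exactly where the non-simplicity of $\mathcal{I}$ (the genuine infinitude of distinct tower levels) must be used. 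This also makes the dichotomy sharp: for the two admissible ideals no such tower exists, and the identity and selection matrices succeed.
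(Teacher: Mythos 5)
Your backward implication is correct and essentially the paper's: your selection matrix $a_{n,k}=\bm{1}_{k=w_n}$ is exactly the matrix produced by Theorem~\ref{thm:RK} from the witnessing map of $\mathcal{I}\le_{\mathrm{RK}}\mathrm{Fin}$, so that half is fine. Your opening reformulation of the forward direction is also sound: via Theorem~\ref{thm:countably generated} and Theorem~\ref{thm:mainsilvermantoeplitz} one indeed gets $\lim_n\sum_k|a_{n,k}|=1$, $\sum_k a_{n,k}^-\to 0$, and the $0$--$1$ law $\limsup_n\mu_n(E)\in\{0,1\}$ with $\mathcal{I}=\{E:\lim_n\mu_n(E)=0\}$.

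The genuine gap is that the forward implication --- the entire content of the theorem --- is never proved: you reduce it to a ``calibration'' of block values $c_j$ along a tower and explicitly leave that step open. Worse, no single-tower contraposition can work, because the class of excluded ideals is too heterogeneous for one construction. If $\mathcal{I}$ is a maximal ideal, the combinatorics of which sub-unions $\bigcup_{j\in T}F_j$ are $\mathcal{I}$-positive is governed by an ultrafilter and cannot be pinned down constructively; what kills this case is descriptive complexity, and in fact your own reformulation already contains it: each $\mu_n$ is continuous on $\{0,1\}^\omega$ (the tails $\sum_{k>k_0}|a_{n,k}|$ are uniformly small since $\|A\|<\infty$), so $\mathcal{I}=\{E:\lim_n\mu_n(E)=0\}$ is Borel --- this is exactly the paper's Claim~\ref{claim1:Gdeltasigmadelta}, and you never exploit it. Similarly, a tall Borel $P$-ideal such as the summable ideal is not admissible, yet there your oscillation scheme has no purchase: every infinite set contains an infinite $\mathcal{I}$-small subset, and the paper eliminates this case (Claims~\ref{claim:Pnottall} and \ref{claim:Pfrechet}) not by a tower but by the Drewnowski--Pa\'{u}l machinery, showing $\mu^\star(S)=\limsup A^+\bm{1}_S$ has the weak Darboux property, extracting a decreasing sequence with $\mu^\star(I_m)=2^{-m}$, and playing it against the separately established $P^+$-property (Claim~\ref{claim:P+ideal}). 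The paper's actual route is structural: core equality forces $\mathcal{I}$ to be analytic, a $P^+$-ideal, a $P$-ideal (Claim~\ref{claim:Pideal}, whose construction is the rigorous version of the one tower scenario you do sketch), and nowhere tall; then the known classification of nowhere tall analytic $P$-ideals (Farah; Filip\'{o}w--Tryba) leaves only $\mathrm{Fin}$, $\mathrm{Fin}\oplus\mathcal{P}(\omega)$, and $\{\emptyset\}\times\mathrm{Fin}$, the last being excluded by $P^+$. Without the Borelness observation, the tallness/Darboux argument, and this classification theorem, the ``delicate combinatorial heart'' you defer is not a finishing detail but the bulk of the proof.
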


(Equivalently, the latter condition means $\mathcal{I}=\{S\subseteq \omega: S\cap T \in \mathrm{Fin}\}$ for some infinite $T\subseteq \omega$.)  
It is worth noting that Theorem \ref{thm:JFin}  
is also related to the question posed by Mazur in \emph{The Scottish Book} whether the notion of statistical convergence (i.e., $\mathcal{Z}$-convergence) of bounded sequences is equivalent to some matrix summability method, see \cite{MR3863065} and references therein. A positive answer has been given by Khan and Orhan in \cite[Theorem 2.2]{MR2340330}.

Based on the previous observations, we leave as an open question to check whether there exists a matrix $A \in (\ell_\infty, \ell_\infty)$ satisfying equality \eqref{eq:mainproblemequalitycores} with $\mathcal{I}=\mathrm{Fin}$ if and only if $\mathcal{J}$ is meager. In the same direction, it would be interesting to know if the condition $\mathcal{I} \le_{\mathrm{RK}} \mathcal{J}$ is also necessary in the statament of Theorem \ref{thm:RK}. 

\section{Proofs}

We start with an auxiliary lemma and, then, we proceed to the proofs of our results.
\begin{lem}\label{lem:equalitycoresc0}
Let $\bm{x}$ and $\bm{y}$ be two relatively compact sequences taking values in a locally convex topological vector space $X$. Let $\mathcal{I}$ be an ideal on $\omega$. 
Then 
$$
\mathrm{core}_{\bm{x}}(\mathcal{I})=\mathrm{core}_{\bm{y}}(\mathcal{I})
$$
whenever $\mathcal{I}\text{-}\lim (\bm{x}-\bm{y})=0$. 
\end{lem}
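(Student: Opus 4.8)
The plan is to establish the two inclusions $\mathrm{core}_{\bm{x}}(\mathcal{I}) \subseteq \mathrm{core}_{\bm{y}}(\mathcal{I})$ and its reverse. Since the hypothesis $\mathcal{I}\text{-}\lim(\bm{x}-\bm{y})=0$ is symmetric in $\bm{x}$ and $\bm{y}$ (it is equivalent to $\mathcal{I}\text{-}\lim(\bm{y}-\bm{x})=0$), it suffices to prove one of them, say the former, and then obtain the other verbatim by interchanging the two sequences. So fix $p \in \mathrm{core}_{\bm{x}}(\mathcal{I})$ and an arbitrary $S \in \mathcal{I}^\star$; the goal is to show $p \in \overline{\mathrm{co}}\{y_n : n \in S\}$, which by the defining intersection yields $p \in \mathrm{core}_{\bm{y}}(\mathcal{I})$.

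Arguing by contradiction, I would assume $p \notin \overline{\mathrm{co}}\{y_n : n \in S\}$. Since $X$ is locally convex and the set $C:=\overline{\mathrm{co}}\{y_n : n\in S\}$ is closed and convex and does not contain the point $p$, the Hahn--Banach separation theorem supplies a continuous linear functional $\varphi$ on $X$ and a real number $\alpha$ with $\varphi(y_n) \le \alpha$ for all $n \in S$ while $\gamma := \varphi(p) > \alpha$. The next step is to transfer this separation back to $\bm{x}$: because $\varphi$ is continuous and linear, applying it to $\bm{x}-\bm{y}$ gives $\mathcal{I}\text{-}\lim_n(\varphi(x_n)-\varphi(y_n))=0$ as a real sequence, so, setting $\varepsilon := (\gamma-\alpha)/2 > 0$, the set $T := \{n \in \omega : |\varphi(x_n)-\varphi(y_n)| < \varepsilon\}$ belongs to $\mathcal{I}^\star$.

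Consequently $S \cap T \in \mathcal{I}^\star$, and for every $n \in S\cap T$ one has $\varphi(x_n) < \varphi(y_n)+\varepsilon \le \alpha + \varepsilon = (\gamma+\alpha)/2 < \gamma$. Since $\varphi$ is linear and continuous, the bound $\varphi(\,\cdot\,) \le (\gamma+\alpha)/2$ passes first to the convex hull and then to its closure, whence $\varphi(z) \le (\gamma+\alpha)/2 < \gamma = \varphi(p)$ for every $z \in \overline{\mathrm{co}}\{x_n : n \in S\cap T\}$. In particular $p \notin \overline{\mathrm{co}}\{x_n : n \in S\cap T\}$, which contradicts $p \in \mathrm{core}_{\bm{x}}(\mathcal{I})=\bigcap_{S' \in \mathcal{I}^\star}\overline{\mathrm{co}}\{x_n: n \in S'\}$ because $S\cap T \in \mathcal{I}^\star$. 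This establishes the inclusion, and the reverse one follows by symmetry.

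The main point requiring care is the passage from the separation inequality on the individual points $y_n$ (respectively $x_n$), indexed by $S$ (respectively $S\cap T$), to the corresponding closed convex hulls: this is exactly where convexity is used to reach $\mathrm{co}$ and continuity of $\varphi$ is used to reach its closure, and where the local convexity of $X$ is essential to guarantee that the separating functional exists at all. Relative compactness of $\bm{x}$ and $\bm{y}$ does not enter the inclusion argument directly; it serves only, within the standing framework, to ensure that the cores are well-defined nonempty objects. Once the separation step and the choice of $\varepsilon$ are set up as above, the remaining verifications are routine.
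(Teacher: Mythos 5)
Your proof is correct, but it takes a genuinely different route from the paper's. The paper argues through ideal cluster points: it first invokes \cite[Lemma 3.5]{MR3920799} together with $\mathcal{I}\text{-}\lim(\bm{x}-\bm{y})=0$ to get $\Gamma_{\bm{x}}(\mathcal{I})=\Gamma_{\bm{y}}(\mathcal{I})$, and then uses the relative compactness of the two sequences to apply \cite[Theorem 3.4]{MR4126774}, which yields the representation $\mathrm{core}_{\bm{x}}(\mathcal{I})=\overline{\mathrm{co}}\,\Gamma_{\bm{x}}(\mathcal{I})$ (and likewise for $\bm{y}$), so the equality of cores falls out of the equality of cluster-point sets. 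You instead work directly from the definition of the core as $\bigcap_{S \in \mathcal{I}^\star}\overline{\mathrm{co}}\{x_n: n \in S\}$ and run a self-contained Hahn--Banach separation: the filter set $T$ on which $|\varphi(x_n)-\varphi(y_n)|<\varepsilon$ lets you transport the separating half-space $\{z: \varphi(z)\le (\gamma+\alpha)/2\}$ from $\{y_n: n \in S\}$ to $\{x_n: n \in S\cap T\}$, and since this half-space is closed and convex it swallows $\overline{\mathrm{co}}\{x_n: n\in S\cap T\}$, contradicting $p$ being in the core of $\bm{x}$; all steps check out, including the symmetry reduction and the membership $T \in \mathcal{I}^\star$ via the neighborhood $\varphi^{-1}((-\varepsilon,\varepsilon))$ of $0$. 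What each approach buys: the paper's proof is shorter modulo the two cited results and fits the cluster-point framework used throughout the paper, but it genuinely needs relative compactness to license the representation $\mathrm{core}=\overline{\mathrm{co}}\,\Gamma$; your argument uses no external machinery and, as you correctly note, never invokes compactness, so it in fact proves the lemma for arbitrary sequences in a locally convex space (one small emendation: the cores are always well-defined as possibly empty closed convex sets --- compactness is what guarantees they are \emph{nonempty}, via the nonemptiness of $\Gamma_{\bm{x}}(\mathcal{I})$, not what makes them well-defined).
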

\begin{proof}
Since $X$ is, in particular, a topological group, it follows by \cite[Lemma 3.5]{MR3920799} and the hypothesis $\mathcal{I}\text{-}\lim (\bm{x}-\bm{y})=0$ that $\Gamma_{\bm{x}}(\mathcal{I})=\Gamma_{\bm{y}}(\mathcal{I})$. 
Now, let $K_{\bm{x}}$ be the closure of the image $\{x_n: n\in \omega\}$, which is a compact subset of the locally convex space $X$. Of course, this implies that $\{x_n: n \in \omega\}\subseteq K_{\bm{x}}+U$ for every open neighborhood $U$ of $0$ (which means that $\bm{x}$ is ``$\mathcal{I}^\star$-asymptotically $K_{\bm{x}}$-controlled,'' using the terminology in \cite[Definition 3.2]{MR4126774}). It follows by \cite[Theorem 3.4]{MR4126774} that $\mathrm{core}_{\bm{x}}(\mathcal{I})=\overline{\mathrm{co}}\,\Gamma_{\bm{x}}(\mathcal{I})$; cf. also \cite[Theorem 2.2]{MR3955010} in the case of first countable locally convex spaces. With an analog reasoning on the sequence $\bm{y}$, we conclude that $\mathrm{core}_{\bm{x}}(\mathcal{I})=\overline{\mathrm{co}}\,\Gamma_{\bm{x}}(\mathcal{I})=\overline{\mathrm{co}}\,\Gamma_{\bm{y}}(\mathcal{I})=\mathrm{core}_{\bm{y}}(\mathcal{I})$. 
\end{proof}

\medskip

\begin{proof}
[Proof of Theorem \ref{thm:countably generated}]
\textsc{Only If part.} Pick a sequence $\bm{x} \in c_b(\mathcal{I})$ and define $\eta:=\mathcal{I}\text{-}\lim \bm{x}$, 
so that $\Gamma_{\bm{x}}(\mathcal{I})=\{\eta\}$. 
It follows that $A\bm{x}$ is well-defined bounded sequence and, thanks to equality \eqref{eq:mainproblemequalitycores}, that 
$\mathrm{core}_{A\bm{x}}(\mathcal{J})=\mathrm{core}_{\bm{x}}(\mathcal{I})=\{\eta\}$. We conclude by \cite[Proposition 4.2]{MR3955010} that $\mathcal{J}\text{-}\lim A\bm{x}=\eta$, therefore $A$ is $(\mathcal{I}, \mathcal{J})$-regular. 

At this point, pick a set $E \in \mathcal{I}^+$. Recall that the hypothesis $A \in (\ell_\infty, \ell_\infty)$ is equivalent to $\sup_{n \in \omega} \sum_k|a_{n,k}|<\infty$, see e.g. \cite[Theorem 2.3.5]{MR1817226}. In addition, it follows by Theorem \ref{thm:mainsilvermantoeplitz} that $\mathcal{J}\text{-}\lim_n |a_{n,k}|=0$ for all $k \in \omega$. This implies that conditions (K1)--(K3) in the statement of \cite[Corollary 4.3]{Leo22} are satisfied (in the case where $d=m=1$, so that $a_{n,k}(i,j)$ is simply $a_{n,k}$ considering that both $i$ and $j$ can take only one value). Hence, thanks to  
\cite[Corollary 4.3]{Leo22}, there exists a $\{-1,0,1\}$-valued sequence $\bm{x}$ supported on $E$ such that 
\begin{equation}\label{eq:equalitylimsup}
\mathcal{J}\text{-}\limsup_{n\to \infty} \sum_{k \in E}|a_{n,k}|=\mathcal{J}\text{-}\limsup_{n\to \infty} \left|A_n\bm{x}\right|.
\end{equation}
(Again, equation \eqref{eq:equalitylimsup} is a rewriting of the claim in \cite[Corollary 4.3]{Leo22} taking into account that both variables $i$ and $j$ can take only value.) 
Define $F:=\{n \in E: x_n=1\}$ and $G:=\{n \in E: x_n=-1\}$, so that $\{F,G\}$ is a partition of $E$ and $\bm{x}:=\bm{1}_F -\bm{1}_G$ (hereafter, $\bm{1}_S$ stands for the characteristic function of $S$). Note that, since $\mathcal{I}$ is an ideal, then at least one between $F$ and $G$ is an $\mathcal{I}$-positive set. For convenience, let $h: \mathbf{R}\to \mathbf{R}$ be the function defined by $\eta \mapsto |\eta|$. Since $h$ is continuous, it follows by \cite[Proposition 3.2]{MR4428911} that 
$$
\Gamma_{h(A\bm{x})}(\mathcal{J})=h\left[\Gamma_{A\bm{x}}(\mathcal{J})\right],
$$
where $h(A\bm{x}):=(h(A_n\bm{x}): n \in \omega)$. 
Since $\bm{x}$ is a bounded real sequence, $A \in (\ell_\infty, \ell_\infty)$, and $h$ is continuous, then $h(A\bm{x})$ is a relatively compact sequence, so that the above sets are nonempty, see e.g. \cite[Lemma 3.1(vi)]{MR3920799}. In particular, it follows that $\max\, \Gamma_{h(A\bm{x})}(\mathcal{J}) 
= \max \, h\left[\Gamma_{A\bm{x}}(\mathcal{J})\right]$. 
Taking into account equality \eqref{eq:mainproblemequalitycores} (so that $\mathcal{J}\text{-}\limsup$ and $\mathcal{J}\text{-}\liminf$ are preserved) and that the sequence $\bm{x}=\bm{1}_F-\bm{1}_G$ has at least a $\mathcal{J}$-cluster point in $\{-1,1\}$, we conclude that 
\begin{displaymath}
\begin{split}
\mathcal{J}\text{-}\limsup_{n\to \infty} \sum_{k \in E}|a_{n,k}|&=\max\, \Gamma_{h(A\bm{x})}(\mathcal{J}) 
= \max \, h\left[\Gamma_{A\bm{x}}(\mathcal{J})\right]\\
&=\max\{|\mathcal{J}\text{-}\limsup A\bm{x}|, |\mathcal{J}\text{-}\liminf A\bm{x}|\}\\
&=\max\{|\mathcal{J}\text{-}\limsup \bm{x}|, |\mathcal{J}\text{-}\liminf \bm{x}|\}
=\max \, h\left[\Gamma_{\bm{x}}(\mathcal{J})\right] =1.
\end{split}
\end{displaymath}
Therefore both conditions \ref{item:Leo1} and \ref{item:Leo2} hold.

\medskip

\textsc{If part.} Conversely, let $A =(a_{n,k}: n,k \in \omega) \in (\ell_\infty, \ell_\infty)$ be a $(\mathcal{I}, \mathcal{J})$-regular matrix which satisfies condition \ref{item:Leo2}. Then we get
$$
1=\mathcal{J}\text{-}\lim_{n\to \infty}\sum_{k \in \omega}a_{n,k} 
\le \mathcal{J}\text{-}\liminf_{n\to \infty} \sum_{k\in \omega} |a_{n,k}|
\le \mathcal{J}\text{-}\limsup_{n\to \infty} \sum_{k\in \omega} |a_{n,k}|=1,
$$
so that $\mathcal{J}\text{-}\lim_n \sum_k |a_{n,k}|=\mathcal{J}\text{-}\lim_n \sum_k a_{n,k}=1$. Decomposing each $a_{n,k}$ into its positive and negative part as $a_{n,k}^+-a_{n,k}^-$ for all $n,k \in \omega$, it follows that 
\begin{equation}\label{eq:positivenegativepart}
\mathcal{J}\text{-}\lim_{n\to \infty} \sum_{k\in \omega} a_{n,k}^-=0
\quad \text{ and }\quad 
\mathcal{J}\text{-}\lim_{n\to \infty} \sum_{k\in \omega} a_{n,k}^+=1.
\end{equation}

At this point, pick a sequence $\bm{x} \in \ell_\infty$, define $\eta:=\mathcal{I}\text{-}\limsup \bm{x}$. Considering that $\mathrm{core}_{\bm{x}+\kappa \bm{1}_\omega}(\mathcal{I})=\mathrm{core}_{\bm{x}}(\mathcal{I})+\{\kappa\}$ and also, by the $(\mathcal{I},\mathcal{J})$-regularity of $A$, that $\mathrm{core}_{A(\bm{x}+\kappa \bm{1}_\omega)}(\mathcal{J})=\mathrm{core}_{A\bm{x}}(\mathcal{J})+\{\kappa\}$ for all $\kappa \in \mathbf{R}$, we can suppose without loss of generality that $\eta>0$. Fix an arbitrary $\varepsilon>0$ and define 
$$
\delta:=\min\left\{\,\frac{\varepsilon}{2+\eta+4\|x\|},\,1\right\}
\quad \text{ and }\quad 
E:=\left\{k \in \omega: x_k\ge \eta-\delta\right\}.
$$
Note that $\delta>0$ and $E$ is an $\mathcal{I}$-positive set since $\eta$ is an $\mathcal{I}$-cluster point of $\bm{x}$. It follows by \eqref{eq:positivenegativepart} and condition \ref{item:Leo2} that $\mathcal{J}\text{-}\limsup_n \sum_{k \in E}a_{n,k}^+=1$. Thus, define 
\begin{equation}\label{eq:definitionS}
S:=\left\{n \in \omega: 
1-\delta \le \sum_{k \in E}a_{n,k}^+ \le \sum_{k \in \omega}|a_{n,k}| \le 1+\delta
\right\}.
\end{equation}
Observe the first inequality in the definition of $S$ holds on a $\mathcal{J}$-positive set, the second one for all $n$, and the latter one on $\mathcal{J}^\star$. Therefore $S \in \mathcal{J}^+$. For each $n \in S$, it also follows that $|\sum_{k \in \omega}a_{n,k}^+-1|\le \delta$, so that 
\begin{equation}\label{eq:estimatesupperbounds}
\left|\,\sum_{k \in \omega}a_{n,k}^-\,\right| \le 2\delta 
\quad \text{ and }\quad 
\left|\,\sum_{k \in E^c}a_{n,k}^+\,\right| \le 2\delta. 
\end{equation}
Putting all together, we obtain that, for all $n \in S$, 
\begin{displaymath}
\begin{split}
A_n\bm{x}&=\sum_{k \in E}a_{n,k}^+x_k+\sum_{k \in E^c}a_{n,k}^+x_k-\sum_{k \in \omega}a_{n,k}^-x_k \\
&\ge (1-\delta)(\eta-\delta)-2\delta \|x\| -2\delta \|x\|\\
&\ge \eta -\delta (1+\eta+4\|x\|) \ge \eta-\varepsilon.
\end{split}
\end{displaymath}

At the same time, define $E^\prime:=\{k \in \omega: x_k \le \eta+\delta\}$, which belongs to $\mathcal{I}^\star$ and note, similarly as above, that $\mathcal{J}\text{-}\lim_n \sum_{k \in E^\prime} |a_{n,k}|
=\mathcal{J}\text{-}\lim_n \sum_{k \in E^\prime} a_{n,k}=\mathcal{J}\text{-}\lim_n \sum_{k \in E^\prime} a_{n,k}^+=1$ and $\mathcal{J}\text{-}\lim_n \sum_{k \in \omega} a_{n,k}^-=0$. Let $S^\prime$ be the set defined as in \eqref{eq:definitionS} replacing $E$ with $E^\prime$, and note that $S^\prime \in \mathcal{J}^\star$. Similarly, estimates \eqref{eq:estimatesupperbounds} hold for all $n \in S^\prime$ replacing $E$ with $E^\prime$. Putting again all together, we obtain that, for all $n \in S^\prime$, 
\begin{displaymath}
\begin{split}
A_n\bm{x}&=\sum_{k \in E}a_{n,k}^+x_k+\sum_{k \in E^c}a_{n,k}^+x_k-\sum_{k \in \omega}a_{n,k}^-x_k \\
&\le (1+\delta)(\eta+\delta)+2\delta \|x\| +2\delta \|x\|\\
&\le \eta +\delta (2+\eta+4\|x\|) \le \eta+\varepsilon.
\end{split}
\end{displaymath}

Since $\varepsilon$ is arbitrary, we conclude that $\mathcal{J}\text{-}\limsup A\bm{x}= \eta$. Therefore $A$ preserves the ideal superior limits for all bounded sequences $\bm{x}$. Replacing $\bm{x}$ with $-\bm{x}$, $A$ preserves also the corresponding ideal inferior limits. It follows by identity \eqref{eq:Icorerepresentation} that equality \eqref{eq:mainproblemequalitycores} holds, concluding the proof. 
\end{proof}

\begin{rmk}\label{rmk:generalizationifonlyif}
It is clear from the proof above that the \textsc{If part} holds without any additional hypothesis on $\mathcal{J}$. Moreover, the fact the $\mathcal{J}$ is countably generated has been used only once in the proof of the \textsc{Only If part}, precisely in the existence of a $\{-1,0,1\}$-valued sequence $\bm{x}$ supported on a given $E \in \mathcal{I}^+$ and satisfying equality \eqref{eq:equalitylimsup}. The latter is trivial if $A\ge 0$ by choosing $\bm{x}=\bm{1}_E$. In this sense, we recover also Theorem \ref{thm:connorfridyorhan}. 
\end{rmk}

\begin{rmk}\label{rmk:alwaysholds}
On the other hand, if 
$\mathcal{I}=\mathcal{J}=\mathcal{Z}$, 
then the analogue of Theorem \ref{thm:countably generated} does \emph{not} hold. Indeed, thanks to the proof of \cite[Theorem 1.4]{ConnorLeo} there exists a matrix $A \in (\ell_\infty, c_0(\mathcal{Z}) \cap \ell_\infty)$ 
and an infinite set $I \in \mathcal{I}$ such that 
$$
\mathcal{Z}\text{-}\limsup_{n\to \infty} \sum_{k \in I}|a_{n,k}|=1. 
$$ 
(Here, $c_0(\mathcal{Z})$ stands for the vector space of sequences which are $\mathcal{Z}$-convergent to $0$.) 
At this point, define $B:=A+\mathrm{Id}$, where $\mathrm{Id}$ stands for the infinite identity matrix. On one hand, for each $\bm{x} \in \ell_\infty$ we have $B\bm{x}=A\bm{x}+\bm{x}$ and $A\bm{x} \in c_0(\mathcal{Z}) \cap \ell_\infty$, hence by Lemma \ref{lem:equalitycoresc0} we get $\mathrm{core}_{B\bm{x}}(\mathcal{Z})=\mathrm{core}_{\bm{x}}(\mathcal{Z})$. Thus equality \eqref{eq:mainproblemequalitycores} holds for the matrix $B$. On the other hand,  
$$
\mathcal{Z}\text{-}\limsup_{n\to \infty} \sum_{k \in \omega}|b_{n,k}|=1+\mathcal{Z}\text{-}\limsup_{n\to \infty} \sum_{k \in \omega}|a_{n,k}| \ge 
1+\mathcal{Z}\text{-}\limsup_{n\to \infty} \sum_{k \in I}|a_{n,k}|=2.
$$
This shows that $B$ does not satisfy condition \ref{item:Leo2}. 
\end{rmk}

\medskip

\begin{proof}
[Proof of Theorem \ref{thm:RK}] 
Since $\mathcal{I} \le_{\mathrm{RK}} \mathcal{J}$, there exists a map $h: \omega \to \omega$ such that $S \in \mathcal{I}$ if and only if $h^{-1}[S] \in \mathcal{J}$. Now, let $A=(a_{n,k}: n,k \in \omega)$ be the matrix defined by
\begin{displaymath}
a_{n,k}=
\begin{cases}
\,1\,\,\,\,& \text{ if }k=h(n),\\
\,0 & \text{ otherwise}.
\end{cases}
\end{displaymath}
Note that $A \in (\ell_\infty, \ell_\infty)$ since every row contains a single $1$ (however, $A$ is not necessarily regular if the witnessing map $h$ cannot be chosen finite-to-one). 
Fix $\bm{x} \in \ell_\infty$, let $U\subseteq \mathbf{R}$ be a nonempty open set, and define $S:=\{n \in \omega: x_n \in U\}$. Observe also that $A_n\bm{x}=\sum_ka_{n,k}x_k=x_{h(n)}$ for all $n \in \omega$. It follows that $S \in \mathcal{I}$ if and only if 
$$
h^{-1}[S]=\{n \in \omega: x_{h(n)} \in U\}=\{n \in \omega: A_n\bm{x} \in U\} \in \mathcal{J}.
$$
This implies that $\Gamma_{\bm{x}}(\mathcal{I})=\Gamma_{A\bm{x}}(\mathcal{J})$, so that by \cite[Corollary 2.3]{MR3955010} we get
$$
\mathrm{core}_{\bm{x}}(\mathcal{I})
=\mathrm{co}(\Gamma_{\bm{x}}(\mathcal{I}))
=\mathrm{co}(\Gamma_{A\bm{x}}(\mathcal{J}))
=\mathrm{core}_{A\bm{x}}(\mathcal{J}).
$$
Therefore equality \eqref{eq:mainproblemequalitycores} holds. 
\end{proof}

\medskip

For our last proof, we need to recall that an ideal $\mathcal{I}$ on $\omega$ is said to be:
\begin{enumerate}[label={\rm (\roman{*})}]
\item a $P$\emph{-ideal} if for all increasing sequences $(I_n)$ in $\mathcal{I}$ there exists $I \in \mathcal{I}$ such that $I_n\setminus I \in \mathrm{Fin}$ for all $n \in \omega$; 
\item a $P^+$\emph{-ideal} if for all decreasing sequences $(I_n)$ in $\mathcal{I}^+$ there exists $I \in \mathcal{I}^+$ such that $I\setminus I_n \in \mathrm{Fin}$ for all $n \in \omega$; 
\item \emph{tall} if every infinite set $S\subseteq \omega$ contains an infinite subset $I\subseteq S$ such that $I \in \mathcal{I}$;
\item \emph{nowhere tall} if, for every $S \in \mathcal{I}^+$, the ideal $\mathcal{I}\upharpoonright S:=\mathcal{I} \cap \mathcal{P}(S)$ is not tall.
\end{enumerate}
\begin{proof}
[Proof of Theorem \ref{thm:JFin}]
\textsc{If part.} 
Suppose that $\mathcal{I}=\{S\subseteq \omega: S \cap T \in \mathrm{Fin}\}$ for some infinite $T\subseteq \omega$ (i.e., $\mathcal{I}=\mathrm{Fin}$ if $T$ is cofinite or $\mathcal{I}$ is an isomorphic copy of $\mathrm{Fin}\oplus \mathcal{P}(\omega)$ on $\omega$ otherwise). Then $\mathcal{I} \le_{\mathrm{RK}} \mathrm{Fin}$. In fact, if $(t_n: n \in \omega)$ denotes the increasing enumeration of elements of $T$, one can choose the witnessing map $h: \omega\to \omega$ defined by $h(n)=t_n$ for all $n \in \omega$. 
The claim follows by Theorem \ref{thm:RK}.

\medskip

\textsc{Only If part.} Let $\mathcal{I}$ be an ideal on $\omega$ for which there exists a matrix $A \in (\ell_\infty, \ell_\infty)$ which satisfies equality \eqref{eq:mainproblemequalitycores} with $\mathcal{J}=\mathrm{Fin}$. We divide the remaining proof in several claims.  

\begin{claim}\label{claim1:Gdeltasigmadelta}
$\mathcal{I}$ is 
an analytic ideal. 
\end{claim}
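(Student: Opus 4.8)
The plan is to read off from the hypotheses a \emph{Borel} description of $\mathcal{I}$, viewed as a subset of the Cantor space $\{0,1\}^\omega$ through the identification $E \leftrightarrow \bm{1}_E$. Since $\mathcal{J}=\mathrm{Fin}$ is countably generated, the matrix $A$ falls under Theorem \ref{thm:countably generated}, so the assumed core equality yields that $A$ is $(\mathcal{I},\mathrm{Fin})$-regular together with condition \ref{item:Leo2}; for $\mathcal{J}=\mathrm{Fin}$ the latter reads $\limsup_n \sum_{k\in E}|a_{n,k}|=1$ for every $E\in\mathcal{I}^+$. Applying Theorem \ref{thm:mainsilvermantoeplitz} (again legitimate because $\mathcal{J}=\mathrm{Fin}$ is countably generated) to the $(\mathcal{I},\mathrm{Fin})$-regular matrix $A$, condition \ref{item:ST3} gives $\lim_n \sum_{k\in E}|a_{n,k}|=0$ for every $E\in\mathcal{I}$, and \ref{item:ST1} records $\|A\|<\infty$.

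First I would combine these two facts into a single biconditional valid for every $E\subseteq\omega$:
$$
E\in\mathcal{I}\iff \lim_{n\to\infty}\sum_{k\in E}|a_{n,k}|=0.
$$
The forward implication is exactly \ref{item:ST3}. Conversely, if $E\in\mathcal{I}^+$ then \ref{item:Leo2} forces $\limsup_n \sum_{k\in E}|a_{n,k}|=1$, so the limit is either absent or equal to $1$; in neither case is it $0$. This pins down $\mathcal{I}$ as the preimage of $0$ under the ``limit'' operation applied to the sequences $(\sum_{k\in E}|a_{n,k}|)_n$.

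Next I would verify that the right-hand side is Borel. Set $f_n(z):=\sum_k |a_{n,k}|z_k$ for $z\in\{0,1\}^\omega$. Each $f_n$ is continuous, since its partial sums $\sum_{k\le m}|a_{n,k}|z_k$ depend on finitely many coordinates and, because $\sum_k|a_{n,k}|\le\|A\|<\infty$ by \ref{item:ST1}, the tails $\sum_{k>m}|a_{n,k}|$ vanish, giving uniform convergence of the partial sums to $f_n$. Hence each $\{z: f_n(z)\le 1/m\}$ is closed, and
$$
\mathcal{I}=\bigl\{z\in\{0,1\}^\omega: \lim_n f_n(z)=0\bigr\}=\bigcap_{m\ge 1}\ \bigcup_{N\in\omega}\ \bigcap_{n\ge N}\Bigl\{z: f_n(z)\le \tfrac1m\Bigr\},
$$
which is an $F_{\sigma\delta}$ set. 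Being Borel, $\mathcal{I}$ is in particular analytic, which is the assertion of the claim.

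There is no genuine obstacle once the biconditional is in hand; the only delicate point is the continuity of the maps $f_n$, and this rests precisely on the absolute summability of the rows furnished by $A\in(\ell_\infty,\ell_\infty)$, i.e.\ by \ref{item:ST1}. The real difficulty of the theorem is deferred to the subsequent claims, where this analyticity must be leveraged, jointly with the summability constraints, to force $\mathcal{I}$ to be either $\mathrm{Fin}$ or an isomorphic copy of $\mathrm{Fin}\oplus\mathcal{P}(\omega)$.
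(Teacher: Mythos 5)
Your proof is correct and follows essentially the same route as the paper: both express $\mathcal{I}$ as a low-level Borel subset of $\{0,1\}^\omega$ by writing membership in $\mathcal{I}$ as a limit condition on the row sums over $E$ and using $\|A\|<\infty$ to make the maps $E\mapsto\sum_{k\in E}a_{n,k}$ (in your case $E\mapsto\sum_{k\in E}|a_{n,k}|$) continuous. The only cosmetic difference is that the paper gets its characterization $\mathcal{I}=\{E:\limsup_n A_n\bm{1}_E\le 0\}$ directly by plugging $\bm{1}_E$ into the core equality, yielding a $G_{\delta\sigma\delta}$ description with signed sums, whereas you detour through Theorems \ref{thm:countably generated} and \ref{thm:mainsilvermantoeplitz} to obtain the biconditional $E\in\mathcal{I}\iff\lim_n\sum_{k\in E}|a_{n,k}|=0$ and an $F_{\sigma\delta}$ description --- both are equally valid and suffice for analyticity.
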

\begin{proof}
Observe that, for each $E\subseteq \omega$, we have $E \in \mathcal{I}$ if and only if $\mathcal{I}\text{-}\limsup \bm{1}_E\le 0$. It follows by equality \eqref{eq:mainproblemequalitycores} that 
$$
\mathcal{I}=\left\{E\subseteq \omega: \limsup A\bm{1}_E\le 0\right\}=\bigcap_{p \in \omega} \bigcup_{q \in \omega} \bigcap_{n \ge q} G_{p,n},
$$
where $G_{p,n}:=\left\{E\subseteq \omega: \sum_{k \in E}a_{n,k}<2^{-p}\right\}$ for all $n,p \in \omega$. Hence, it is sufficient to show that each $G_{n,p}$ is open. For, fix $n,p \in \omega$. If $G_{n,p}=\emptyset$, then it is open. Otherwise fix $E \in G_{n,p}$. Since $\sum_{k}|a_{n,k}|\le \|A\|<\infty$, there exists $k_0 \in \omega$ such that 
$$
\sum_{k>k_0}|a_{n,k}|<\frac{1}{2}\left(2^{-p}-\sum_{k \in E}a_{n,k}\right).
$$
Now, let $F\subseteq \omega$ be a set such that $E\cap [0,k_0]=F\cap [0,k_0]$. It follows that 
\begin{displaymath}
\begin{split}
\sum_{k \in F}a_{n,k}
&\le \sum_{k \in F\cap [0, k_0]}a_{n,k}+\sum_{k \in F\setminus [0, k_0]}|a_{n,k}|\\
&\le \sum_{k \in E\cap [0, k_0]}a_{n,k}+\sum_{k >k_0}|a_{n,k}|\\
&\le \sum_{k \in E}a_{n,k} +2 \sum_{k >k_0}|a_{n,k}|<2^{-p}.\\
\end{split}
\end{displaymath}
This shows that $F \in G_{n,p}$. Hence $E$ is an interior point, so that $G_{n,p}$ is open. Therefore $\mathcal{I}$ is a $G_{\delta \sigma \delta}$-ideal.
\end{proof}

\medskip

\begin{claim}\label{claim:P+ideal}
$\mathcal{I}$ is a $P^+$-ideal. 
\end{claim}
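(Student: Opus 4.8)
The plan is to turn $\mathcal{I}$-positivity into a quantitative property of the rows of $A$ and then run a block-diagonalization. Since $\mathrm{Fin}$ is countably generated, Theorem~\ref{thm:countably generated} applies and tells us that $A$ is $(\mathcal{I},\mathrm{Fin})$-regular and satisfies condition~\ref{item:Leo2}; moreover, as in the \textsc{If part} of that proof, $\lim_n\sum_k|a_{n,k}|=1$. Combining \ref{item:Leo2} with condition~\ref{item:ST3} of Theorem~\ref{thm:mainsilvermantoeplitz} (where, since $\mathcal{J}=\mathrm{Fin}$, all $\mathcal{J}$-limits are ordinary limits), I obtain the dichotomy that $\limsup_n\sum_{k\in E}|a_{n,k}|$ equals $1$ when $E\in\mathcal{I}^+$ and equals $0$ when $E\in\mathcal{I}$, while $\limsup_n\sum_{k\in E}|a_{n,k}|\le 1$ for every $E\subseteq\omega$. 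Thus a set is $\mathcal{I}$-positive exactly when some rows of $A$ carry asymptotically full absolute mass on it.

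Given a decreasing sequence $(I_m:m\in\omega)$ in $\mathcal{I}^+$, I will construct strictly increasing column thresholds $k_0<k_1<\cdots$ and rows $n_0<n_1<\cdots$, and set $I:=\bigcup_j\bigl(I_j\cap[k_j,k_{j+1})\bigr)$. The inclusion part is automatic: if $x\in I$ and $x\ge k_m$, then $x\in I_j\cap[k_j,k_{j+1})$ for some $j\ge m$, so $x\in I_j\subseteq I_m$; hence $I\setminus I_m\subseteq[0,k_m)$ is finite for every $m$, no matter how the thresholds are chosen. All the work goes into arranging $I\in\mathcal{I}^+$, i.e.\ $\limsup_n\sum_{k\in I}|a_{n,k}|=1$.

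At stage $j$, with $k_j$ already fixed, I first use that $[0,k_j)$ is finite (hence in $\mathcal{I}$, so $\lim_n\sum_{k<k_j}|a_{n,k}|=0$) together with $\limsup_n\sum_{k\in I_j}|a_{n,k}|=1$ to select a row $n_j>n_{j-1}$ with $\sum_{k\in I_j}|a_{n_j,k}|>1-2^{-j}$ and $\sum_{k<k_j}|a_{n_j,k}|<2^{-j}$ simultaneously; then, since the $n_j$-th row is absolutely summable, I pick $k_{j+1}>k_j$ with $\sum_{k\ge k_{j+1}}|a_{n_j,k}|<2^{-j}$. A triangle-inequality estimate then shows that the mass of row $n_j$ on the block $I_j\cap[k_j,k_{j+1})\subseteq I$ exceeds $1-3\cdot 2^{-j}$, so $\limsup_n\sum_{k\in I}|a_{n,k}|\ge\lim_j(1-3\cdot2^{-j})=1$, forcing $I\in\mathcal{I}^+$.

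The one delicate step---the main obstacle---is this simultaneous choice of $n_j$: I need a single row that is both nearly concentrated on $I_j$ and negligible on the initial segment $[0,k_j)$. This is possible because $\sum_{k<k_j}|a_{n,k}|<2^{-j}$ holds for all but finitely many $n$, whereas $\limsup_n\sum_{k\in I_j}|a_{n,k}|=1$ yields infinitely many $n$ with $\sum_{k\in I_j}|a_{n,k}|>1-2^{-j}$; the intersection of a cofinite set with an infinite set is still infinite, so an admissible $n_j>n_{j-1}$ exists. Everything else is routine bookkeeping with the triangle inequality.
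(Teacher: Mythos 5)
Your proof is correct, and it takes a genuinely different route from the paper's. The paper argues by contradiction: it fixes a decreasing sequence $(I_n)$ in $\mathcal{I}^+$ witnessing the failure of the $P^+$-property, runs a block construction combinatorially similar to yours (interleaving rows $n_i$ with $A_{n_i}\bm{1}_{I_i}>1-2^{-i}$ against column thresholds that kill initial segments and tails), and assembles a set $F$ that the negation hypothesis forces into $\mathcal{I}$ while the construction forces $\limsup_n A_n\bm{1}_F\ge 1$, contradicting equality \eqref{eq:mainproblemequalitycores} applied to $\bm{1}_F$. Notably, the paper never needs condition \ref{item:Leo2}: for indicator sequences the core equality alone already yields $\limsup_n A_n\bm{1}_E=1$ for $E\in\mathcal{I}^+$ and $\lim_n A_n\bm{1}_E=0$ for $E\in\mathcal{I}$ with \emph{signed} row sums, plus the elementary regularity facts from Theorem \ref{thm:mainsilvermantoeplitz} (null columns, absolutely summable rows). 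You instead construct the pseudo-intersection witness $I=\bigcup_j\bigl(I_j\cap[k_j,k_{j+1})\bigr)$ directly and certify $I\in\mathcal{I}^+$ via the absolute-value dichotomy, importing $\limsup_n\sum_{k\in E}|a_{n,k}|=1$ for $E\in\mathcal{I}^+$ from condition \ref{item:Leo2} of Theorem \ref{thm:countably generated} and $\lim_n\sum_{k\in E}|a_{n,k}|=0$ for $E\in\mathcal{I}$ from condition \ref{item:ST3}; this is legitimate, since $\mathrm{Fin}$ is countably generated and both theorems are established before Theorem \ref{thm:JFin}, so there is no circularity, and your two delicate points (choosing $n_j$ in the intersection of a cofinite set of rows small on $[0,k_j)$ with the infinite set of rows nearly full on $I_j$, and the inclusion $I\setminus I_m\subseteq[0,k_m)$) both check out. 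What each approach buys: yours is logically more direct, avoiding the somewhat delicate formulation of the negation of $P^+$, but it leans on the \textsc{Only If} part of Theorem \ref{thm:countably generated}, whose proof rests on a nontrivial sign-selection result from the literature; the paper's contradiction argument carries heavier bookkeeping but is more self-contained. Indeed, you could get the best of both by running your direct construction with signed sums $A_n\bm{1}_E$ extracted straight from \eqref{eq:mainproblemequalitycores}, which would make \ref{item:Leo2} unnecessary.
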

\begin{proof}
Let us suppose for the sake of contradiction that $\mathcal{I}$ is not a $P^+$-ideal, hence it is possible to fix a strictly decreasing sequence $(I_n: n \in \omega)$ in $\mathcal{I}^+$ such that for all sequences $(F_n: n \in \omega)$ of finite sets with $F_n \subseteq I_{n}\setminus I_{n+1}$ we have $\bigcup_n F_n \in \mathcal{I}$. Since $I_0 \in \mathcal{I}^+$ it follows by equality \eqref{eq:mainproblemequalitycores} that $\limsup A\bm{1}_{I_0}=1$. Hence there exists $n_0 \in \omega$ such that $A_{n_0}\bm{1}_{I_0}>1-2^{-0}$. Set $p_0:=0$ and pick an integer $q_0>p_0$ such that $\sum_{k \in F_0}a_{n_0,k}\bm{1}_{I_0}(k)>1-2^{-0}$, where $F_0:=\omega \cap [p_0,q_0]$. Recall also that $A$ is $(\mathcal{I}, \mathrm{Fin})$-regular since it satisfies \eqref{eq:mainproblemequalitycores}, hence it is regular. In particular, by Theorem \ref{thm:mainsilvermantoeplitz}, we have $\lim_k a_{n,k}=0$ and $\sum_k|a_{n,k}|<\infty$ for all $n,k \in \omega$. 
At this point, suppose that $n_{i-1}$ and $F_{i-1}:=\omega \cap [p_{i-1},q_{i-1}]$ have been defined for some positive integer $i$. Then, proceed recursively as follows: 
\begin{enumerate}[label={\rm (\roman{*})}]
\item Pick an integer $p_{i}>q_{i-1}$ with the property that 
$$
\sum_{k\ge p_{i}}|a_{n_j,k}|<2^{-i} 
$$
for all $j \in \omega \cap [0,i-1]$. 
\item Let $n_{i}>n_{i-1}$ be an integer such that 
$$
A_{n_i}\bm{1}_{I_i}>1-2^{-i} 
\quad \text{ and }\quad 
\sum_{k< p_{i}}|a_{n,k}|<2^{-i}
$$
for all integers $n\ge n_i$.
\item Let $q_i>p_i$ be an integer such that 
$$
\sum_{k \in F_i}a_{n_i,k}\bm{1}_{I_i}(k)>1-2^{1-i},
$$
where $F_i:=\omega \cap [p_i,q_i]$. (Note that this is possible because $\sum_{k\ge p_i}a_{n_i,k}\bm{1}_{I_i}(k)$ is at least 
$A_{n_i}\bm{1}_{I_i} - \sum_{k<p_i}|a_{n_i,k}|>1-2^{1-i}$.)
\end{enumerate}
To conclude, define $F:=\bigcup_i F_i \cap I_i$. By the standing hypothesis, we have $F \in \mathcal{I}$, hence by equality \eqref{eq:mainproblemequalitycores} we get $\limsup A\bm{1}_F=\mathcal{I}\text{-}\limsup \bm{1}_F=0$. On the other hand, it follows by the construction above that, for all $i\ge 1$, 
\begin{displaymath}
\begin{split}
\limsup_{n\to \infty} A_n\bm{1}_F 
&\ge \limsup_{i\to \infty}\sum_{k \in F}a_{n_i,k} \\
&\ge \limsup_{i \to \infty}\left(\sum_{k \in F_i \cap I_i}a_{n_i,k}-\sum_{k< p_i}|a_{n_i,k}|- \sum_{k\ge p_{i+1}}|a_{n_i,k}|\right)\\
&\ge \limsup_{i\to \infty}(1-2^{1-i}-2^{-i}-2^{-1-i})=1.
\end{split}
\end{displaymath}
This contradiction proves that $\mathcal{I}$ is a $P^+$-ideal. 
\end{proof}

\medskip

\begin{claim}\label{claim:Pideal}
$\mathcal{I}$ is a $P$-ideal. 
\end{claim}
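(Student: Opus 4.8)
\textsc{Proof (plan).} The starting point is to convert equality \eqref{eq:mainproblemequalitycores} into a statement about a nonnegative kernel. Since $\mathcal{J}=\mathrm{Fin}$ is countably generated, Theorem \ref{thm:countably generated} shows that $A$ is $(\mathcal{I},\mathrm{Fin})$-regular and satisfies condition \ref{item:Leo2}; together with Theorem \ref{thm:mainsilvermantoeplitz} this gives $\|A\|<\infty$, $\lim_n a_{n,k}=0$ for each $k$, $\lim_n\sum_k a_{n,k}^-=0$, and $\lim_n\sum_k a_{n,k}^+=1$, exactly as in the \textsc{If part} of Theorem \ref{thm:countably generated}. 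Writing $\mu_n(E):=\sum_{k\in E}a_{n,k}^+$, these are finite measures with $\mu_n(\omega)\to 1$ and $\lim_n\mu_n(B)=0$ for every finite $B\subseteq\omega$. Evaluating \eqref{eq:mainproblemequalitycores} on characteristic functions and discarding the negligible negative part, I would record the dichotomy $E\in\mathcal{I}\iff\lim_n\mu_n(E)=0$, whereas $E\in\mathcal{I}^+\iff\limsup_n\mu_n(E)=1$. Thus the whole problem is reduced to an asymptotically normalized, atomless, nonnegative kernel.

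Next I would argue by contradiction. Suppose $\mathcal{I}$ is not a $P$-ideal and fix a pairwise disjoint family $(D_k)$ in $\mathcal{I}$ admitting no $\subseteq^\star$-upper bound in $\mathcal{I}$, so that $U:=\bigcup_k D_k\in\mathcal{I}^+$; note that such a witness must contain infinitely many infinite $D_k$, since a disjoint family of finite sets is always vacuously bounded. Applying Claim \ref{claim:P+ideal} (the $P^+$ property) to the decreasing sequence $\big(\bigcup_{k\ge n}D_k\big)_n$ in $\mathcal{I}^+$, I would extract $V\in\mathcal{I}^+$ with $V\subseteq U$ and $V\cap D_k$ finite for every $k$, and write $V=\bigsqcup_k V_k$ with each $V_k$ finite. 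The point of the witness is the following rigidity: any set $F=\bigcup_k(D_k\setminus r_k)$ obtained by deleting a finite $r_k\subseteq D_k$ from each $D_k$ satisfies $D_k\subseteq^\star F$ for all $k$, hence has no chance of lying in $\mathcal{I}$, so $F\in\mathcal{I}^+$ and the dichotomy forces $\limsup_n\mu_n(F)=1$.

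The plan is then to choose the finite deletions so as to violate this. Setting $R:=\bigcup_k r_k$ (which meets each $D_k$ finitely) one has $\mu_n(F)=\mu_n(U)-\mu_n(R)$, so it suffices to pick the $r_k$ with $\mu_n(R)\ge\delta$ for all large $n$ and some fixed $\delta>0$, which contradicts $\limsup_n\mu_n(F)=1$. I would build $R$ by a recursion in the spirit of the proof of Claim \ref{claim:P+ideal}: selecting an increasing sequence of rows $n_i$, using $\lim_n\mu_n([0,b])=0$ to move past the coordinates already committed, and enlarging finitely many $r_k$ to absorb a fraction $\delta$ of the mass that row $n_i$ places on the independent directions $D_k$. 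The hard part will be to make this shadowing \emph{uniform over all sufficiently large rows} rather than merely along a subsequence: a one-sided $\limsup$ estimate (which sufficed in Claim \ref{claim:P+ideal}) is not enough here, because I need a lower bound on $\liminf_n\mu_n(R)$. This is precisely where the failure of the $P$-property must be used — the infinitely many genuinely independent directions $D_k$ together with the finiteness of the total mass $\mu_n(\omega)\to 1$ are what should prevent any single positive set built over these directions from evading a definite fraction of every concentrated row. The signed entries create no further difficulty, since $\lim_n\sum_k a_{n,k}^-=0$ permits replacing $A_n\bm{1}_E$ by $\mu_n(E)$ throughout.
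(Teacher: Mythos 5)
Your reduction to the nonnegative kernel $\mu_n(E)=\sum_{k\in E}a_{n,k}^+$, the dichotomy $E\in\mathcal{I}\iff\lim_n\mu_n(E)=0$ (with $\limsup_n\mu_n(E)=1$ for $E\in\mathcal{I}^+$), and the rigidity observation that $F=\bigcup_k(D_k\setminus r_k)\in\mathcal{I}^+$ for every choice of finite $r_k\subseteq D_k$ all match the paper's setup (the paper phrases the witness as an increasing chain $(I_n)$ with $D_n=I_{n+1}\setminus I_n$, which is the same thing). But your argument stops exactly where the proof has to happen, and the target you set for the missing construction is not merely hard --- it is unachievable in general. You ask for finite deletions with $\mu_n(R)\ge\delta$ for all large $n$, i.e.\ $\liminf_n\mu_n(R)\ge\delta$ where $R\subseteq U:=\bigcup_kD_k$. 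Yet $U\in\mathcal{I}^+$ only gives $\limsup_n\mu_n(U)=1$; whenever $\omega\setminus U\in\mathcal{I}^+$ --- a case your reduction cannot exclude --- the dichotomy gives $\limsup_n\mu_n(\omega\setminus U)=1$, and since $\mu_n(U)+\mu_n(\omega\setminus U)=\mu_n(\omega)\to1$ this forces $\liminf_n\mu_n(U)=0$, hence $\liminf_n\mu_n(R)=0$ for \emph{every} admissible $R$. So no recursion ``in the spirit of Claim \ref{claim:P+ideal}'' can deliver your bound; and weakening it to ``$\mu_n(R)\ge\delta$ along rows where $U$ is heavy'' gives you no handle either, since you do not know in advance which rows those are. (Also, the $P^+$ extraction of the set $V$ in your second paragraph is never used afterwards and can be dropped.)

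The workable aim is the complementary \emph{upper} bound $\lim_nA_n\bm{1}_F=0$ for one suitably staggered choice of deletions, which is what the paper proves. Since each $D_i\in\mathcal{I}$, equality \eqref{eq:mainproblemequalitycores} gives $\lim_nA_n\bm{1}_{D_i}=0$ for each fixed $i$; so one picks row tails $k_n$ with $\sum_{k>k_n}|a_{n,k}|<2^{-n}$, thresholds $h_m$ past which $\sum_ka_{n,k}^-<2^{-m}$, and times $t_m\ge h_m$ with $\bigl|\sum_{i\le m}A_n\bm{1}_{D_i}\bigr|<2^{-m}$ for all $n\ge t_m$, and then deletes from each $D_i$ the initial segment $[0,k_{t_i}]$. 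For a row $n\in[t_m,t_{m+1})$, every surviving element of $D_i$ with $i>m$ lies beyond $k_{t_{m+1}}>k_n$, so up to the $2^{-n}$ tail and the vanishing negative parts, row $n$ meets $D_\infty:=\bigcup_i\bigl(D_i\setminus[0,k_{t_i}]\bigr)$ only inside $\bigcup_{i\le m}D_i$, yielding $|A_n\bm{1}_{D_\infty}|\le 4^{1-m}$ on that window and hence $A\bm{1}_{D_\infty}\to0$, contradicting $\limsup_nA_n\bm{1}_{D_\infty}=1$ from your own rigidity step. The crucial idea your plan lacks is exactly this window-by-window stagger --- deletions calibrated to the times $t_m$ so that \emph{every} large row effectively sees only finitely many $D_i$, each already $\mu$-small --- which replaces the impossible lower bound on $\mu_n(R)$ by a genuine limit statement for $\mu_n(F)$.
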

\begin{proof}
Let us suppose for the sake of contradiction that $\mathcal{I}$ is not a $P$-ideal, hence it is possible to fix an increasing sequence $(I_n: n \in \omega)$ in $\mathcal{I}$ such that, for all sequences $(F_n: n \in \omega)$ with $F_n \subseteq D_n:=I_{n+1}\setminus I_n$ for each $n$, we have $\bigcup_n (D_n \setminus F_n) \in \mathcal{I}^+$. Without loss of generality, we can assume that $I_0=\emptyset$. Define 
$$
S:=\{n \in \omega: D_n \notin \mathrm{Fin}\}.
$$ 
It is easy to see that, if $S$ is finite, then $(I_n: n \in \omega)$ cannot be a sequence witnessing that $\mathcal{I}$ is not a $P$-ideal: indeed, in such case, $I:=I_0$ if $S=\emptyset$ or $I:=I_{1+\max S}$ if $S\neq \emptyset$ satisfies $I_n\setminus I \in \mathrm{Fin}$ for all $n \in \omega$. Hence $S$ has to be infinite, which implies that, passing to a suitable subsequence, we can assume without loss of generality that $D_n$ is infinite for all $n \in \omega$. 

Now, note that, since each $I_n$ belongs to $\mathcal{I}$, then $\lim A\bm{1}_{I_n}=0$ for all $n \in \omega$ by equality \eqref{eq:mainproblemequalitycores}. 
Let $(k_n: n \in \omega)$ be a strictly increasing sequence in $\omega$ such that 
\begin{equation}\label{eq:Pideal1}
\sum_{k>k_n}|a_{n,k}|<2^{-n} \quad \text{ for all }n \in \omega. 
\end{equation}
It follows by the \textsc{If part} in the proof of Theorem \ref{thm:countably generated} with $\mathcal{J}=\mathrm{Fin}$ that $\lim_n \sum_k a_{n,k}^-=0$. In particular, there exists a strictly increasing sequence $(h_m: m \in \omega)$ such that 
\begin{equation}\label{eq:Pideal2}
\sum_{k \in \omega}a_{n,k}^- <2^{-m} \quad 
\text{ for all }n\ge h_m. 
\end{equation}
Let also $(t_m: m \in \omega)$ be a strictly increasing sequence in $\omega$ such that 
\begin{equation}\label{eq:Pideal3}
t_m\ge h_m \text{ and }
\left|\,\sum_{i\le m}A_n\bm{1}_{D_i}\right|
<2^{-m} 
\quad \text{ for all }m \in \omega \text{ and }n\ge t_m.
\end{equation}
To conclude, define $F_n:=\omega \cap [0,k_{t_{n}}]$ for all $n \in \omega$ and set $D_\infty:=\bigcup_n (D_n\setminus F_n)$. On the one hand, it follows by the standing hypothesis that $D_\infty \in \mathcal{I}^+$, hence by equality \eqref{eq:mainproblemequalitycores} we have $\limsup A\bm{1}_{D_\infty}=1$. On the other hand, pick $m \in \omega$ and fix $n\in [t_m, t_{m+1})$. It follows 
 that 
\begin{displaymath}
\begin{split}
 \left| A_n \bm{1}_{D_\infty} \right|
 &= \left|\, \sum_{i \in \omega}  A_n \bm{1}_{D_i\setminus F_i}\right| \\
& \le \sum_{k>k_n}|a_{n,k}|+\left|\, \sum_{i \in \omega}\sum_{k\le k_n}a_{n,k}\bm{1}_{D_i\setminus F_i}(k)\,\right| \\
& \le 2^{-n}+ \left|\, \sum_{k\le k_n}\sum_{i \in \omega}a_{n,k}\bm{1}_{D_i\setminus F_i}(k)\right|, \\
\end{split}
\end{displaymath}
where at the last inequality we used \eqref{eq:Pideal1}. 
At this point, notice that, if $k\le k_n$ and $i>m$ then $\bm{1}_{D_i\setminus F_i}(k)=0$ since $\min (D_i\setminus F_i) > \max (F_i) = k_{t_{i}}\ge k_{t_{m+1}}>k_n$. Taking into account that $n\ge t_m \ge m$, inequality \eqref{eq:Pideal2}, and that $t_m\ge h_m$, we obtain 
\begin{displaymath}
\begin{split}
 \left| A_n \bm{1}_{D_\infty} \right| 
& \le 2^{-m}+ \left|\, \sum_{k\le k_n}\sum_{i \le m}a_{n,k}\bm{1}_{D_i\setminus F_i}(k)\right|\\
& \le 2^{-m}+ \left|\, \sum_{k\le k_n}\sum_{i \le m}a_{n,k}\bm{1}_{D_i}(k)\right|+\sum_{k \in \omega} a_{n,k}^-\\
& \le 2^{1-m}+ \left|\, \sum_{k\le k_n}\sum_{i \le m}a_{n,k}\bm{1}_{D_i}(k)\right|.
\end{split}
\end{displaymath}
Lastly, using also inequality \eqref{eq:Pideal3}, we get
\begin{displaymath}
\begin{split}
 \left| A_n \bm{1}_{D_\infty} \right| 
& \le 2^{1-m}+ \left|\, \sum_{k\in \omega}\sum_{i \le m}a_{n,k}\bm{1}_{D_i}(k)\right| + \left|\sum_{k>k_n} \sum_{i\le m} a_{n,k} \bm{1}_{D_i}(k)\right|\\
& \le 2^{1-m}+ \left|\, \sum_{i \le m}A_n\bm{1}_{D_i}\right| + \sum_{k>k_n} |a_{n,k}|\\
& \le 2^{1-m}+2^{-m}+2^{-n} \le 4^{1-m}.
\end{split}
\end{displaymath}
This proves that $\lim A\bm{1}_{D_\infty}=0$, which gives the desired contradiction. 
\end{proof}

\medskip

Thanks to Claims \ref{claim1:Gdeltasigmadelta}, \ref{claim:P+ideal}, and \ref{claim:Pideal}, $\mathcal{I}$ is an analytic $P$-ideal which is also a $P^+$-ideal.
 Although it will not be used in the following results, it follows by \cite[Theorem 2.5]{MR3883171} that $\mathcal{I}$ is a necessarily $F_\sigma$ $P$-ideal (we omit details). 

\medskip

\begin{claim}\label{claim:Pnottall}
$\mathcal{I}$ is not tall. 
\end{claim}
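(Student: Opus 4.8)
The plan is to argue by contradiction: assuming $\mathcal{I}$ is tall, I will produce a single $\mathcal{I}$-positive set $E$ with $\limsup_{n}\sum_{k\in E}|a_{n,k}|<1$, which contradicts condition \ref{item:Leo2}. That condition is available here because $\mathcal{J}=\mathrm{Fin}$ is countably generated, so Theorem \ref{thm:countably generated} applies to our $A$. First I would record the asymptotics already at our disposal. Writing $\mu_n(S):=\sum_{k\in S}a_{n,k}^+$, the $(\mathcal{I},\mathrm{Fin})$-regularity of $A$ together with the computation in the \textsc{If part} of Theorem \ref{thm:countably generated} gives $\mu_n(\omega)\to 1$, while condition \ref{item:ST3} of Theorem \ref{thm:mainsilvermantoeplitz} yields $\lim_n\sum_{k\in S}|a_{n,k}|=0$ for every $S\in\mathcal{I}$; in particular $a_{n,k}^+\to 0$ for each fixed column $k$, so every column is asymptotically light. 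Since $|A_n\bm{1}_E-\mu_n(E)|\le\sum_k a_{n,k}^-\to 0$, it suffices to build $E\in\mathcal{I}^+$ with $\limsup_n\mu_n(E)\le 3/4$.

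The construction would be a fusion producing a $\subseteq$-decreasing sequence of $\mathcal{I}$-positive sets $Q_0\supseteq Q_1\supseteq\cdots$ and integers $N_0<N_1<\cdots$ with $\mu_n(Q_m)\le 3/4$ for all $n\le N_m$, together with a choice of finitely many elements of each $Q_m$ that are fed into $E$. At stage $m$ I would handle the finitely many new rows $n\in(N_{m-1},N_m]$ by first discarding their finitely many ``heavy'' columns (weight above a small threshold), which is harmless since removing a finite set preserves $\mathcal{I}^+$; then splitting the remainder into finitely many $\mathcal{I}$-positive pieces, which is legitimate because an analytic ideal is nowhere maximal and hence every positive set splits into two positive sets; and finally selecting a sub-union that captures at most $3/4$ of each new row's surviving mass, which is possible by a balancing (random-choice) argument because the surviving columns are light. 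The final $E$ is then extracted as an $\mathcal{I}$-positive pseudo-intersection of the $Q_m$, using the $P^+$ property established in Claim \ref{claim:P+ideal}. Organising the fusion so that $E\setminus Q_m$ is finite and choosing $m=m(n)$ with $N_{m-1}<n\le N_m$ gives $\mu_n(E)\le\mu_n(Q_{m(n)})+\mu_n(E\setminus Q_{m(n)})\le 3/4+o(1)$, provided the error terms vanish.

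The hard part will be to reconcile the two demands on $E$, and this is exactly where tallness is indispensable. On the one hand $E$ must stay $\mathcal{I}$-positive; on the other, each element placed into $E$ must remain light for \emph{all} the later rows it will be compared against, so that the moving finite sets $E\setminus Q_{m(n)}$ carry vanishing $\mu_n$-mass. For a non-tall ideal these requirements collide: with $\mathcal{I}=\mathrm{Fin}$ and $A$ the identity matrix one has $\mu_n=\delta_n$, every column $k$ is the \emph{unique} heavy column of row $k$, and safely placing $k$ into $E$ would require $k$ to be already removed, which is impossible; the net effect is that $E$ is forced to be empty, i.e.\ not positive, and indeed $\limsup_n\mu_n(E)=1$ for every infinite $E$. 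Tallness supplies the missing room: positive sets then contain infinite $\mathcal{I}$-members and can be thinned away from any prescribed finite set while staying positive, so the fusion can be steered to keep $E$ positive and simultaneously supported on columns that stay light for the relevant rows. Making this steering precise—quantifying ``light for all later rows'' against the growth of $N_m$ and verifying that the surviving light columns remain $\mathcal{I}$-positive at every stage—is the technical crux, and I expect it to rest on interleaving the $P^+$ fusion with an application of tallness at each step. (Equivalently, one could cast the entire argument in purely ideal-theoretic terms, showing that a tall analytic $P$-ideal cannot be $P^+$, again contradicting Claim \ref{claim:P+ideal}.)
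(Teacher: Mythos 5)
Your target is sound---producing a single $E\in\mathcal{I}^+$ with $\limsup_n\sum_{k\in E}|a_{n,k}|<1$ does contradict condition \ref{item:Leo2}, which is available since $\mathrm{Fin}$ is countably generated---but there is a genuine gap at exactly the point you flag as the ``technical crux,'' and that crux is the entire content of the claim. The missing ingredient is the uniform column smallness $\lim_n\sup_k a_{n,k}^+=0$, which is precisely what tallness buys: if it failed, there would be $\varepsilon>0$, rows $n_0<n_1<\cdots$ and columns $k_i$ with $a_{n_i,k_i}^+\ge\varepsilon$; since the columns of $A$ are null, the $k_i$ may be taken distinct, tallness yields an infinite $I\in\mathcal{I}$ inside $\{k_i: i\in\omega\}$, and then $\limsup_n\sum_{k\in I}|a_{n,k}|\ge\varepsilon$ contradicts condition \ref{item:ST3} of Theorem \ref{thm:mainsilvermantoeplitz}. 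Your sketch never derives this or any substitute, and the one concrete consequence of tallness you do state---that a positive set can be thinned away from a prescribed finite set while staying positive---holds for \emph{every} ideal containing $\mathrm{Fin}$, so it cannot carry the argument. Without uniform smallness the fusion breaks at the extraction step: the $P^+$ property of Claim \ref{claim:P+ideal} hands you \emph{some} positive pseudo-intersection $E$ with each $E\setminus Q_m$ finite, but with no control over \emph{which} finite sets these are, so $\mu_n(E\setminus Q_{m(n)})$ need not vanish. The identity matrix with $\mathcal{I}=\mathrm{Fin}$ shows the failure mechanism concretely: $Q_m:=(N_m,\infty)$ meets all your stage requirements and $\mathrm{Fin}$ is $P^+$, yet every infinite pseudo-intersection $E$ has $\mu_n(E)=1$ for all $n\in E$, whence $\limsup_n\mu_n(E)=1$. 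Of course $\mathrm{Fin}$ is not tall, but nothing written in your construction invokes tallness in a way that excludes this mechanism; the pigeonhole, splitting, and balancing steps are fine but peripheral.

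For comparison, the paper's proof avoids the fusion entirely: it passes to $A^+$ (core equality survives by Lemma \ref{lem:equalitycoresc0}, since $\lim_n\sum_k a_{n,k}^-=0$), sets $\mu^\star(S):=\limsup A^+\bm{1}_S$, so that $\mathcal{I}=\{S\subseteq\omega:\mu^\star(S)=0\}$; tallness gives $\lim_n\sup_k a_{n,k}^+=0$ by \cite[Proposition 7.2]{MR1865750}, whence $\mu^\star$ has the weak Darboux property by \cite[Theorem 6.2]{MR1865750}; it then picks a decreasing sequence $(I_m)$ with $\mu^\star(I_m)=2^{-m}$ (so each $I_m\in\mathcal{I}^+$) and checks that every pseudo-intersection $I$ satisfies $\mu^\star(I)\le 2^{-m}+\mu^\star(I\setminus I_m)=2^{-m}$ for all $m$, hence $I\in\mathcal{I}$, contradicting Claim \ref{claim:P+ideal}. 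Note that once uniform smallness is in hand, your scaffolding simplifies away: a single Darboux application producing $X$ with $\mu^\star(X)=\nicefrac{1}{2}$ already gives $X\in\mathcal{I}^+$ violating \ref{item:Leo2}. So the one lemma you must actually prove to rescue your proposal is $\lim_n\sup_k a_{n,k}^+=0$; as written, the proposal defers precisely this and therefore does not constitute a proof.
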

\begin{proof}
Let us suppose for the sake of contradiction that $\mathcal{I}$ is tall. Define the infinite matrix $A^+:=(a_{n,k}^+: n,k \in \omega)$. Since $\lim_n \sum_k a_{n,k}^-=0$ (cf. the proof of Claim \ref{claim:Pideal}), it follows that $A^+$ is a nonnegative $(\mathcal{I}, \mathrm{Fin})$-regular matrix; in particular, it is a nonnegative regular matrix. At this point, define the map $\mu^\star: \mathcal{P}(\omega) \to \mathbf{R}$ by 
$$
\mu^\star(S):=\limsup A^+ \bm{1}_S 
\quad \text{ for all }S \subseteq \omega.
$$
Note also that $\lim (A\bm{x}-A^+\bm{x})=0$ for all $\bm{x} \in \ell_\infty$, hence by Lemma \ref{lem:equalitycoresc0} and equality \eqref{eq:mainproblemequalitycores} 
$$
\mathrm{core}_{A^+\bm{x}}(\mathrm{Fin})=\mathrm{core}_{\bm{x}}(\mathcal{I}) 
\quad 
\text{ for all sequences }\bm{x} \in \ell_\infty.
$$
Thus $\mathcal{I}=\{S\subseteq \omega: \mathcal{I}\text{-}\lim \bm{1}_S=0\}=\{S\subseteq \omega: \lim A^+ \bm{1}_S=0\}=\{S\subseteq \omega: \mu^\star(S)=0\}$. Since $\mathcal{I}$ is not tall, it follows by \cite[Proposition 7.2]{MR1865750} that $\lim_n \sup_k a_{n,k}^+=0$. In addition, recalling that $A^+$ is a nonnegative matrix, we have also that $\lim_n a_{n,k}^+=0$ and $\sum_k a_{n,k}^+<\infty$ for all $n,k \in \omega$ by Theorem \ref{thm:mainsilvermantoeplitz}. It follows by \cite[Theorem 6.2]{MR1865750} that the function $\mu^\star$ has the weak Darboux property, i.e., for each $S\subseteq \omega$ and $y \in [0,\mu^\star(S)]$ there exists $X\subseteq S$ such that $\mu^\star(X)=y$, cf. \cite[Section 2]{MR3597402}. This implies that there exists a decreasing sequence $(I_m: m \in \omega)$ of subsets of $\omega$ such that 
$$
\mu^\star(I_m)=2^{-m}
\quad \text{ for all }m \in \omega. 
$$
At this point, let $I\subseteq \omega$ be a set such that $J_m:=I\setminus I_m \in \mathrm{Fin}$ for all $m \in \omega$. Since $\mu^\star$ is monotone and subadditive, we obtain
$$
\mu^\star(I) \le \mu^\star(I_m)+\mu^\star(J_m)=2^{-m}+\limsup_{n\to \infty} \sum_{k \in J_m}a_{n,k}=2^{-m}
\quad \text{ for all }m \in \omega.
$$
Hence $\mu^\star(I)=0$, i.e., $I \in \mathcal{I}$. This proves that $\mathcal{I}$ is not a $P^+$-ideal, which contradicts Claim \ref{claim:P+ideal}. Therefore $\mathcal{I}$ cannot be tall.
\end{proof}

\medskip

\begin{claim}\label{claim:Pfrechet}
$\mathcal{I}$ is a nowhere tall ideal.  
\end{claim}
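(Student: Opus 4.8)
The plan is to argue by contradiction, mirroring as closely as possible the proof of Claim \ref{claim:Pnottall}. Suppose that $\mathcal{I}$ is not nowhere tall, so that there is $S \in \mathcal{I}^+$ for which the restriction $\mathcal{I}\upharpoonright S$ is tall. As in Claim \ref{claim:Pnottall}, I would first replace $A$ by its positive part $A^+=(a_{n,k}^+)$, which by $\lim_n\sum_k a_{n,k}^-=0$ and Lemma \ref{lem:equalitycoresc0} is a nonnegative regular matrix still satisfying equality \eqref{eq:mainproblemequalitycores}, and I keep the notation $\mu^\star(X):=\limsup_n\sum_{k\in X}a_{n,k}^+$, so that $\mathcal{I}=\{X\subseteq\omega:\mu^\star(X)=0\}$ and $\mu^\star(S)>0$.

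Next I would record two reductions. First, $\mathcal{I}\upharpoonright S$ is again a $P^+$-ideal: a decreasing sequence in $(\mathcal{I}\upharpoonright S)^+=\mathcal{I}^+\cap\mathcal{P}(S)$ is decreasing in $\mathcal{I}^+$, so Claim \ref{claim:P+ideal} yields a pseudo-intersection $J\in\mathcal{I}^+$, and since $J$ is almost contained in the first term of the sequence, $J\setminus S$ is finite, whence $J\cap S\in(\mathcal{I}\upharpoonright S)^+$ is the desired pseudo-intersection inside $S$. Second, letting $(s_k)$ be the increasing enumeration of $S$, I pass to the column-restricted matrix $B:=(a_{n,s_k}^+:n,k\in\omega)$: it is nonnegative with null columns and $\sup_n\sum_k b_{n,k}\le\|A\|<\infty$, and its summability ideal $\{X:\limsup_n\sum_{k\in X}b_{n,k}=0\}=\{X:\{s_k:k\in X\}\in\mathcal{I}\}$ is exactly the isomorphic copy of $\mathcal{I}\upharpoonright S$ on $\omega$; in particular it is tall.

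I would then run the submeasure argument of Claim \ref{claim:Pnottall} localized to $B$. Since the ideal of $B$ is tall, \cite[Proposition 7.2]{MR1865750} gives $\lim_n\sup_k b_{n,k}=\lim_n\sup_{j\in S}a_{n,j}^+=0$, whence \cite[Theorem 6.2]{MR1865750} shows that $\mu^\star$ restricted to subsets of $S$ has the weak Darboux property. Starting from $\mu^\star(S)>0$ and halving repeatedly (applying weak Darboux inside each $I_m$), I obtain a decreasing sequence $(I_m)$ of subsets of $S$ with $\mu^\star(I_m)=2^{-m}\mu^\star(S)$, so that each $I_m\in(\mathcal{I}\upharpoonright S)^+$. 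For any $I$ with $I\setminus I_m$ finite for all $m$, subadditivity gives $\mu^\star(I)\le\mu^\star(I_m)+\mu^\star(I\setminus I_m)=2^{-m}\mu^\star(S)\to 0$, so $\mu^\star(I)=0$, i.e. $I\in\mathcal{I}\upharpoonright S$. Thus $(I_m)$ is a decreasing sequence in $(\mathcal{I}\upharpoonright S)^+$ admitting no pseudo-intersection in $(\mathcal{I}\upharpoonright S)^+$, contradicting the $P^+$-property of $\mathcal{I}\upharpoonright S$ established above. This shows that no such $S$ exists, i.e. $\mathcal{I}$ is nowhere tall.

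The main obstacle is the step invoking \cite[Proposition 7.2]{MR1865750} and \cite[Theorem 6.2]{MR1865750} for the matrix $B$, which is \emph{not} regular (its row sums $\sum_k b_{n,k}=\sum_{j\in S}a_{n,j}^+$ only satisfy $\limsup_n\sum_k b_{n,k}=\mu^\star(S)$, rather than converging to $1$). I expect both results to hold verbatim for nonnegative matrices with null columns and uniformly bounded row sums, since their proofs rely on a spreading of mass rather than on regularity; should they be available only for regular matrices, the remedy is to append to each row $n$ a disjoint block of fresh dummy columns carrying the deficit $\sum_{j\notin S}a_{n,j}^+$, spread over enough columns that every added entry is $o(1)$. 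This produces a genuinely regular nonnegative matrix with null columns, the same limit $\lim_n\sup_k=0$, and the same summability ideal on subsets of $S$, after which the argument goes through unchanged.
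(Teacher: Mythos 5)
Your proof is correct and takes essentially the same route as the paper: restrict the positive part of $A$ to the columns indexed by $S$, invoke \cite[Proposition 7.2]{MR1865750} and \cite[Theorem 6.2]{MR1865750} to get the weak Darboux property of the associated submeasure, and contradict the $P^+$-property of $\mathcal{I}\upharpoonright S$ via a halving sequence, exactly as in Claim \ref{claim:Pnottall}. Your closing worry about non-regular row sums is already resolved without any dummy-column repair: since $S \in \mathcal{I}^+$, condition \ref{item:Leo2} of Theorem \ref{thm:countably generated} (with $\mathcal{J}=\mathrm{Fin}$) together with $\lim_n \sum_k a_{n,k}^-=0$ gives $\limsup_n \sum_{k \in S} a_{n,k}^+ = 1$, which is precisely why the paper notes that the restricted matrix, though not necessarily regular, satisfies the hypotheses of the two cited results.
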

\begin{proof}
Fix a set $S \in \mathcal{I}^+$ and consider the ideal $\tilde{\mathcal{I}}:=\mathcal{I}\upharpoonright S$. Since $\mathcal{I}$ is analytic by Claim \ref{claim1:Gdeltasigmadelta} and $\mathcal{P}(S)$ is closed, then $\tilde{\mathcal{I}}$ is analytic as well. Moreover, since $\mathcal{I}$ is both a $P$-ideal and $P^+$-ideals by Claims \ref{claim:P+ideal} and \ref{claim:Pideal}, respectively, it is immediate that the same properties hold for $\tilde{\mathcal{I}}$. Let $\tilde{A}=(\tilde{a}_{n,k}: n,k \in \omega)$ be the matrix defined by $\tilde{a}_{n,k}:=a_{n,k}$ if $k \in S$ and $\tilde{a}_{n,k}:=0$ otherwise. Now, note that, by equality \eqref{eq:mainproblemequalitycores}, 
\begin{displaymath}
\begin{split}
\tilde{\mathcal{I}}&=\{X\subseteq S: \mathcal{I}\text{-}\lim \bm{1}_X=0\}=\{X\subseteq S: \lim A\bm{1}_X=0\}\\
&=\{X\subseteq S: \lim \tilde{A}\bm{1}_X=0\}=\{X\subseteq S: \tilde{\mu}^\star(X) =0\},
\end{split}
\end{displaymath}
where $\tilde{\mu}^\star(X):=\limsup_n \tilde{A}^+ \bm{1}_X$ for each $X\subseteq S$. Lastly, observe that $\tilde{A}^+$ has the following properties: $\limsup_n \sum_k \tilde{a}_{n,k}^+=1$, $\lim_n \tilde{a}_{n,k}^+=0$, and $\sum_k \tilde{a}_{n,k}^+<\infty$ for all $n,k \in \omega$. In particular, even if $\tilde{A}^+$ is not necessarily regular, it satisfies the hypotheses of \cite[Theorem 6.2 and Proposition 7.2]{MR1865750}. Hence, we proceed verbatim as in Claim \ref{claim:Pnottall} and we obtain that $\tilde{\mathcal{I}}$ is not tall. Since $S$ is arbitrary, we conclude that $\mathcal{I}$ is a nowhere tall ideal. 
\end{proof}

\medskip

Thanks to Claims \ref{claim1:Gdeltasigmadelta}, \ref{claim:Pideal}, and \ref{claim:Pfrechet} 
we know that $\mathcal{I}$ is an analytic $P$-ideal which is also  nowhere tall (notice that such properties are invariant under isomorphisms). 
Then, it is known that $\mathcal{I}$ is necessarily $\mathrm{Fin}$ or (an isomorphic copy on $\omega$ of) $\mathrm{Fin}\oplus \mathcal{P}(\omega)$ or $\{\emptyset\} \times \mathrm{Fin}$, see e.g. \cite[Corollary 1.2.11]{MR1711328} or \cite[Theorem 2.26]{MR4041540}. Finally, $\mathcal{I}$ has to be also a $P^+$-ideal by Claim \ref{claim:P+ideal}. Hence, it is immediate to check that $\mathcal{I}$ cannot be a copy of $\{\emptyset\} \times \mathrm{Fin}$. This concludes the proof.
\end{proof}

\begin{rmk}
Pick an infinite set $T\subseteq \omega$ which is not cofinite  
and define $\mathcal{J}:=\{S\subseteq \omega: S\cap T \in \mathrm{Fin}\}$ (hence, $\mathcal{J}$ is an isomorphic copy on $\omega$ of $\mathrm{Fin}\oplus \mathcal{P}(\omega)$). Pick also an ideal $\mathcal{I}$ on $\omega$ such that $\mathcal{I}\le_{\mathrm{RK}}\mathcal{J}$. 
Thanks to Theorem \ref{thm:RK}, 
there exists an infinite matrix $A \in (\ell_\infty, \ell_\infty)$ such that equality \eqref{eq:mainproblemequalitycores} holds. 
Now, as it has been observed in the proof of the \textsc{If part} of Theorem \ref{thm:JFin}, we have also $\mathcal{J} \le_{\mathrm{RK}} \mathrm{Fin}$. Hence, with the same argument,  there exists a matrix $B \in (\ell_\infty, \ell_\infty)$ such that $\mathrm{core}_{B\bm{x}}(\mathrm{Fin})=\mathrm{core}_{\bm{x}}(\mathcal{J})$ for all sequences $\bm{x} \in \ell_\infty$. 
In addition, by the proof of Theorem \ref{thm:RK}, it is possible to assume that each row of $B$ contains a single $1$. 
Set $C:=BA$ (observe that each entry of $C$ is well defined) and note that, if $\bm{x} \in \ell_\infty$ then $C\bm{x}=B(A\bm{x})$ is bounded, hence $C \in (\ell_\infty, \ell_\infty)$. It follows also that 
$$
\forall \bm{x} \in \ell_\infty, \quad 
\mathrm{core}_{C\bm{x}}(\mathrm{Fin})
=\mathrm{core}_{A\bm{x}}(\mathcal{J})
=\mathrm{core}_{\bm{x}}(\mathcal{I}). 
$$
This does not contradict the claim of Theorem \ref{thm:JFin}. In fact, we claim that, if $\mathcal{I}\le_{\mathrm{RK}}\mathcal{J}$ then either $\mathcal{I}=\mathrm{Fin}$ or $\mathcal{I}$ is an isomorphic copy of $\mathrm{Fin}\oplus \mathcal{P}(\omega)$.

For, let $h: \omega \to \omega$ be a map which witnesses $\mathcal{I}\le_{\mathrm{RK}}\mathcal{J}$. 
Observe that $W:=h[T] \in \mathcal{I}^+$ since $T \in \mathcal{J}^+$, hence $W$ is an infinite set. Considering that $h^{-1}[F] \in \mathrm{Fin}$ if and only if $F \in \mathrm{Fin}$ for each $F\subseteq \omega$, we obtain that 
\begin{displaymath}
\begin{split}
\forall S \in \omega, \quad 
S \in \mathcal{I} 
&\,\,\Longleftrightarrow\,\, h^{-1}[S] \in \mathcal{J}
\,\,\Longleftrightarrow\,\, h^{-1}[S] \cap T \in \mathrm{Fin}\\
&\,\,\Longleftrightarrow\,\, h^{-1}[S \cap W] \in \mathrm{Fin}
\,\,\Longleftrightarrow\,\, S \cap W \in \mathrm{Fin}.
\end{split}
\end{displaymath}
To sum up, $W$ is an infinite set and $\mathcal{I}=\{S\subseteq \omega: S \cap W \in \mathrm{Fin}\}$. Therefore either $\mathcal{I}=\mathrm{Fin}$ (in the case where $W$ is cofinite) or $\mathcal{I}$ is isomorphic to $\mathrm{Fin}\oplus \mathcal{P}(\omega)$ (in the opposite case). 
\end{rmk}

\section{Acknowledgments} 

The author is grateful to an anonymous referee for a careful reading of the manuscript and several useful observations.

\bibliographystyle{amsplain}

\begin{thebibliography}{99}


\bibitem{MR0011332}
H.~S. Allen, \emph{{$T$}-transformations which leave the core of every bounded
  sequence invariant}, J. London Math. Soc. \textbf{19} (1944), 42--46.
 

\bibitem{MR4600193}
A.~Aveni and P.~Leonetti, \emph{Most numbers are not normal}, Math. Proc.
  Cambridge Philos. Soc. \textbf{175} (2023), no.~1, 1--11. 

\bibitem{MR3883171}
M.~Balcerzak and P.~Leonetti, \emph{On the relationship between ideal cluster
  points and ideal limit points}, Topology Appl. \textbf{252} (2019), 178--190.


\bibitem{MR2735533}
A.~Bartoszewicz, S.~G{\l}\c{a}b, and A.~Wachowicz, \emph{Remarks on ideal
  boundedness, convergence and variation of sequences}, J. Math. Anal. Appl.
  \textbf{375} (2011), no.~2, 431--435.

\bibitem{MR1817226}
J.~Boos, \emph{Classical and modern methods in summability}, Oxford
  Mathematical Monographs, Oxford University Press, Oxford, 2000, Assisted by
  Peter Cass, Oxford Science Publications. 

\bibitem{MR2241135}
J.~Connor, J.~A. Fridy, and C.~Orhan, \emph{Core equality results for
  sequences}, J. Math. Anal. Appl. \textbf{321} (2006), no.~2, 515--523.
 

\bibitem{ConnorLeo}
J.~Connor and P.~Leonetti, \emph{A characterization of {$(\mathcal{I},
  \mathcal{J})$}-regular matrices}, J. Math. Anal. Appl. \textbf{504} (2021),
  no.~1, Paper No. 125374, 10. 

\bibitem{MR1865750}
L.~Drewnowski and P.~J. Pa\'{u}l, \emph{The {N}ikod\'{y}m property for ideals
  of sets defined by matrix summability methods}, Rev. R. Acad. Cienc. Exactas
  F\'{i}s. Nat. (Esp.) \textbf{94} (2000), no.~4, 485--503, Perspectives in
  mathematical analysis (Spanish). 

\bibitem{MR1711328}
I.~Farah, \emph{Analytic quotients: theory of liftings for quotients over
  analytic ideals on the integers}, Mem. Amer. Math. Soc. \textbf{148} (2000),
  no.~702, xvi+177.

\bibitem{MR3863065}
R.~Filip\'{o}w and J.~Tryba, \emph{Ideal convergence versus matrix
  summability}, Studia Math. \textbf{245} (2019), no.~2, 101--127. 

\bibitem{MR4041540}
R.~Filip\'{o}w and J.~Tryba, \emph{Representation of ideal convergence as a union and intersection
  of matrix summability methods}, J. Math. Anal. Appl. \textbf{484} (2020),
  no.~2, 123760, 21. 

\bibitem{MR1441452}
J.~A. Fridy and C.~Orhan, \emph{Statistical core theorems}, J. Math. Anal.
  Appl. \textbf{208} (1997), no.~2, 520--527. 

\bibitem{MR1416085}
J.~A. Fridy and C.~Orhan, \emph{Statistical limit superior and limit inferior}, Proc. Amer.
  Math. Soc. \textbf{125} (1997), no.~12, 3625--3631. 

\bibitem{MR4126774}
V.~Kadets and D.~Seliutin, \emph{On relation between the ideal core and ideal
  cluster points}, J. Math. Anal. Appl. \textbf{492} (2020), no.~1, 124430, 7.


\bibitem{MR3883309}
T.~Kania, \emph{A letter concerning {L}eonetti's paper `{C}ontinuous
  projections onto ideal convergent sequences'}, Results Math. \textbf{74}
  (2019), no.~1, Paper No. 12, 4. 

\bibitem{MR2340330}
M.~K. Khan and C.~Orhan, \emph{Matrix characterization of {$A$}-statistical
  convergence}, J. Math. Anal. Appl. \textbf{335} (2007), no.~1, 406--417.


\bibitem{MR3771234}
A.~Kwela, \emph{Erdos-{U}lam ideals vs. simple density ideals}, J. Math. Anal.
  Appl. \textbf{462} (2018), no.~1, 114--130.

\bibitem{MR0142952}
G.~Laush and S.~Park, \emph{Knopp's core theorem and subsequences of a bounded
  sequence}, Proc. Amer. Math. Soc. \textbf{13} (1962), 971--974. 

\bibitem{Leo22}
P.~Leonetti, \emph{Regular matrices of unbounded linear operators}, Proc. Roy.
  Soc. Edinburgh Sect. A, to appear
  (\href{https://doi:10.1017/prm.2024.1}{doi:10.1017/prm.2024.1}). 
  
\bibitem{Leo25conv}
P.~Leonetti, \emph{Rough families, cluster points, and cores}, J. Convex Anal. \textbf{32} (2025), no.~4, 1083--1090. 

\bibitem{MR3836186}
P.~Leonetti, \emph{Continuous projections onto ideal convergent sequences}, Results
  Math. \textbf{73} (2018), no.~3, Paper No. 114, 5.

\bibitem{MR3955010}
P.~Leonetti, \emph{Characterizations of the ideal core}, J. Math. Anal. Appl.
  \textbf{477} (2019), no.~2, 1063--1071. 

\bibitem{MR4506089}
P.~Leonetti, \emph{Tauberian theorems for ordinary convergence}, J. Math. Anal.
  Appl. \textbf{519} (2023), no.~2, Paper No. 126798, 10.

\bibitem{MR4428911}
P.~Leonetti and M.~Caprio, \emph{Turnpike in infinite dimension}, Canad. Math.
  Bull. \textbf{65} (2022), no.~2, 416--430. 

\bibitem{MR3920799}
P.~Leonetti and F.~Maccheroni, \emph{Characterizations of ideal cluster
  points}, Analysis (Berlin) \textbf{39} (2019), no.~1, 19--26. 

\bibitem{MR4507668}
P.~Leonetti and C.~Orhan, \emph{On some locally convex {FK} spaces}, Topology
  Appl. \textbf{322} (2022), Paper No. 108327, 9. 

\bibitem{MR3597402}
P.~Leonetti and S.~Tringali, \emph{Upper and lower densities have the strong
  {D}arboux property}, J. Number Theory \textbf{174} (2017), 445--455.


\bibitem{Alwin23}
M.~A. Rincon-Villamizar and C.~Uzcategui~Aylwin, \emph{Banach spaces of
  $\mathcal{I}$-convergent sequences}, J. Math. Anal.
  Appl. \textbf{536} (2024), no.~2, Paper No. 128271, 19.
  

\bibitem{MR0487146}
A.~A. \v{S}\v{c}erbakov, \emph{Cores of sequences of complex numbers and their
  regular transformations}, Mat. Zametki \textbf{22} (1977), no.~6, 815--823.


\end{thebibliography}

\end{document}